\theoremstyle{plain}
\newtheorem{theorem}{Theorem}
\newtheorem*{theorem1*}{Theorem 1}
\newtheorem*{theorem2*}{Theorem 2}
\newtheorem{lemma}[theorem]{Lemma}
\theoremstyle{definition}
\newtheorem{definition}[theorem]{Definition}
\newtheorem{remark}[theorem]{Remark}
\newtheorem{example}[theorem]{Example}
\DeclareMathOperator{\Hom}{Hom}
\DeclareMathOperator{\Graph}{Graph}
\DeclareMathOperator{\id}{id}
\DeclareMathOperator{\Bord}{Bord}
\DeclareMathOperator{\ThreeBord}{Bord^\text{or}_{23}}
\DeclareMathOperator{\Vect}{Vect}
\DeclareMathOperator{\Null}{Null}
\DeclareMathOperator{\Surfaces}{Surfaces}
\newcommand{\be}{\begin{equation}}
\newcommand{\ee}{\end{equation}}
\newcommand{\ig}[1]{\begin{aligned} \includegraphics{#1} \end{aligned}}
\begin{document}

\title{Three-dimensional TQFTs via string-nets and two-dimensional surgery}
\author{Bruce Bartlett \\ \\
\small Mathematics Division, Stellenbosch University \\
\small Merriman Avenue, Stellenbosch \\
\small South Africa \\ 
\small and \\
\small National Institute for Theoretical and Computational Sciences (NITheCS) \\
\small South Africa \\
\small \tt bbartlett@sun.ac.za}
	
\maketitle

\begin{abstract}
If $C$ is a spherical fusion category, the string-net construction associates to each closed oriented surface $\Sigma$ the vector space $Z_\text{SN}(\Sigma)$ of linear combinations of $C$-labelled graphs on $\Sigma$ modulo local relations, in a way which is functorial with respect to orientation-preserving diffeomorphisms of surfaces. We show how to extend this assignment to a 3-dimensional topological quantum field theory (TQFT), by defining how the surgery generators in Juh\'{a}sz' presentation of the oriented 3-dimensional bordism category act on the string-net vector spaces. We show that the resulting TQFT, which is formulated completely in the two-dimensional graphical language of string-nets, is an alternative description of the Turaev-Viro state sum model.
\end{abstract}

\section{Introduction}
A 3-dimensional oriented topological quantum field theory (TQFT) is a symmetric monoidal functor \cite{segal, a88-tqft}
\be \label{defnTQFT}
 Z : \Bord^\text{or}_{23} \rightarrow \Vect
\ee
where $\Bord^\text{or}_{23}$ is the oriented 3-dimensional bordism category (objects are oriented closed surfaces and morphisms are 3-dimensional oriented cobordisms) and $\Vect$ is the category of vector spaces. Besides providing invariants of closed 3-manifolds and knots and links inside them, such a functor also provides a representation of the mapping class group $\Gamma(\Sigma)$ on the vector space $Z(\Sigma)$ assigned to a closed oriented surface $\Sigma$. Since the mapping class groups of surfaces are foundational in low-dimensional topology, it is precisely this property of a 3-dimensional TQFT which is often the most interesting. Some important results and applications in this regard are the asymptotic faithfulness of the representations coming from quantum groups \cite{andersen2006asymptotic, freedman2002quantum}, the fact that the image of these representations is infinite in general \cite{funar1999tqft},
the fact that mapping class groups do not have Kazhdan's property (T) \cite{andersen2007mapping}, and the asymptotic expansion conjecture for mapping tori \cite{andersen2019asymptotic}.

To construct a 3-dimensional oriented TQFT \eqref{defnTQFT} one needs some initial data, with very general initial data being that of a spherical fusion category $C$ \cite{bw99-sc}. The most well-known  way to construct this TQFT from $C$ directly (without first passing to the Drinfeld center of $C$) is the Turaev-Viro model \cite{turaev1992state}, a state-sum model which uses triangulations. See \cite[Chapter VII.3]{t94-qik} for the most complete description of this TQFT.

In the Turaev-Viro model, it is easy to compute the numerical invariant of a closed 3-manifold $M$ (pick a triangulation of $M$ and compute a state-sum). On the other hand, the vector space $Z(\Sigma)$ assigned to a closed surface $\Sigma$ is a bit more cumbersome since it is defined as a colimit over all triangulations of $\Sigma$. Hence, the drawback of the Turaev-Viro model is that the representation of the mapping class group on the vector space $Z(\Sigma)$, though explicit and well-defined, is difficult to work with in practice. 

In this paper we introduce an alternative approach to construct the 3-dimensional oriented TQFT arising from a spherical fusion category $C$. Our approach, which is geometric in nature and does not use triangulations or pants decompositions, combines the graphical calculus of {\em string-nets} (see \cite{kirillov2011string} for an overview) with the presentation of $\ThreeBord$ via {\em 2-dimensional surgery moves} due to Juh{\'a}sz \cite[Definition 1.4]{juhasz2018defining}. 

In our approach the vector space $Z(\Sigma)$ assigned to a closed oriented surface $\Sigma$ is simply the space of $C$-labelled string-nets on $\Sigma$, on which the mapping class group acts very easily and naturally. This part of our construction (the definition of the string-net vector space $Z(\Sigma)$ associated to an oriented surface $\Sigma$ and its functoriality with respect to oriented diffeomorphisms $\phi : \Sigma \rightarrow \Sigma'$) is not new. It originated in the physics literature with the work of Kitaev \cite{k05-aes} and Levin and Wen \cite{levin2005string} and a general mathematical account (treating carefully the case of surfaces with boundary) has been given by Kirillov \cite{kirillov2011string} (see also \cite{balsam2012turaev, goosen2018oriented} for useful overviews).

What {\em is} new in our approach is that we show how to extend the construction of these string-net vector spaces into a {\em full} TQFT by showing how to assign linear maps to Juh{\'a}sz's two-dimensional surgery moves in a way which satisfies the relations listed in \cite[Definition 1.4]{juhasz2018defining}. Our main results are the following.

\begin{theorem} Given a spherical fusion category $C$, the assignments $\Sigma \mapsto Z_\text{SN}(\Sigma)$ and $e \mapsto Z_\text{SN}(e)$ listed in Definition \ref{defn_of_string_modifications} satisfy Juh\'{a}sz' relations $\mathcal{R}$ and hence define a string-net TQFT $Z_\text{SN}$.
\end{theorem}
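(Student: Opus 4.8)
The plan is to check, relation by relation, that the linear maps $Z_\text{SN}(e)$ assigned to Juh\'asz' generating surgery moves respect every relation in the list $\mathcal{R}$. Because Juh\'asz' theorem presents $\ThreeBord$ by exactly these generators and relations, the universal property of a category given by generators and relations then guarantees that $Z_\text{SN}$ extends uniquely to a functor $\ThreeBord \to \Vect$; that this functor is symmetric monoidal follows from the fact that a $C$-labelled graph on a disjoint union $\Sigma_1 \sqcup \Sigma_2$ is the same datum as a pair of graphs, so $Z_\text{SN}(\Sigma_1 \sqcup \Sigma_2) \cong Z_\text{SN}(\Sigma_1) \otimes Z_\text{SN}(\Sigma_2)$ compatibly with the surgery maps. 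It is convenient to sort the relations in $\mathcal{R}$ into three groups according to how much fusion-categorical input their verification requires: the \emph{diffeomorphism relations} (relations internal to the mapping class groups, and naturality of a surgery move under diffeomorphisms supported away from the surgery sphere), the \emph{locality relations} (two surgery moves along disjoint framed spheres commute), and the \emph{Cerf-theoretic relations} (handle cancellation and handle slides).

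For the first two groups I would exploit that each string-net surgery map $Z_\text{SN}(e)$ is \emph{local}: it is induced by a graphical modification supported inside a disk or annulus neighbourhood of the surgery sphere and acts as the identity on any string-net drawn outside that neighbourhood. Since functoriality of the spaces $Z_\text{SN}(\Sigma)$ under orientation-preserving diffeomorphisms is already established \cite{kirillov2011string}, both the mapping-class-group relations and the naturality relations reduce to the observation that a diffeomorphism carrying one surgery region onto another intertwines the corresponding local modifications. The locality relations are then immediate, since disjoint surgery neighbourhoods carry modifications supported in disjoint regions of $\Sigma$, which commute on the nose.

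The genuine content lies in the Cerf-theoretic group. For handle cancellation I would insert the tube produced by a $0$-surgery and then apply the capping $1$-surgery that undoes it, and show using the local string-net relations --- fusion, bubble evaluation, and the completeness relation that resolves the identity on a boundary circle as a sum over simple labels weighted by the quantum dimensions $d_i$ and the total dimension $\mathcal{D}$ --- that the composite collapses to the identity map. For a handle slide, the two composites of surgery maps differ by pushing a string-net strand across the attached handle; after localizing, this reduces to an isotopy of graphs together with the defining local relations, and here sphericality of $C$ is exactly what ensures that closing a strand around the handle is insensitive to the direction in which it is slid.

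I expect the Cerf-theoretic relations to be the main obstacle, for two reasons. First, they are the only relations whose proof genuinely invokes the pentagon, sphericality, and completeness axioms of $C$ rather than mere locality or the already-known diffeomorphism functoriality. Second, and more delicately, the scalar normalizations built into the birth, death, and surgery maps (powers of $\mathcal{D}$ and of the $d_i$) must be fixed once and for all so that \emph{every} cancellation and slide relation closes up simultaneously; pinning down a single global choice of these factors that is consistent across all of $\mathcal{R}$ is the step where the bookkeeping is most error-prone, and the one I would cross-check against the known Turaev-Viro normalization as a consistency test.
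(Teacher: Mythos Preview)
Your high-level strategy --- verify each relation in $\mathcal{R}$ and then invoke the universal property of $\mathcal{F}(\mathcal{G})/\mathcal{R}$ --- matches the paper's, and your treatment of R1--R4 via locality and diffeomorphism functoriality is fine (the paper dismisses these in one line). But there is a genuine gap in your identification of the ``Cerf-theoretic'' relations.

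Juh\'asz' list $\mathcal{R}$ does \emph{not} contain a handle-slide relation. The relations beyond R1--R4 are R5 (critical-point cancellation, in two flavours: birth followed by $0$-surgery, and $0$-surgery followed by $1$-surgery) and R6 (\emph{conjugation invariance}: $e_S \sim e_{\overline{S}}$ for $0$- and $1$-spheres). You address only one case of R5 and then replace R6 by a handle-slide argument that is not among the relations to be checked. Conjugation invariance is not a handle slide: it asserts that the surgery map is unchanged when the framed sphere $S$ is replaced by its orientation-conjugate $\overline{S}$, and its verification has nothing to do with pushing strands across a handle. For the $0$-sphere case one must show the Kirby loop is insensitive to reversing the orientation of the belt circle, which holds because $d_i = d_{i^*}$ --- and \emph{this} is where sphericality enters, not in any slide. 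For the $1$-sphere case one must show the cutting move is invariant under rotating the embedded disk by $180^\circ$, which comes down to the symmetry of the pairing \eqref{pairing} (a consequence of pivotality). Your proposal locates sphericality in the wrong relation and omits the actual R6 argument entirely.

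A smaller point: your handle-cancellation sketch invokes more machinery than needed. In the paper, the birth/$0$-surgery cancellation is just the observation that a Kirby loop bounding a disk evaluates to $D^2$, cancelling the $\tfrac{1}{D^2}$ from birth; the $0$-/$1$-surgery cancellation is the observation that one may first isotope the string-net off the arc $c$ joining the two attaching points, so that the cut meets only the Kirby loop, and cutting a Kirby loop simply deletes it. No completeness relation or pentagon is required. The normalization bookkeeping you worry about reduces to the single factor $\tfrac{1}{D^2}$ on birth.
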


\begin{theorem} Given a spherical fusion category $C$, the string-net TQFT $Z_\text{SN}$ based on $C$ is naturally isomorphic to the Turaev-Viro TQFT $Z_\text{TV}$ based on $C$.
\end{theorem}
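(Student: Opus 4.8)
The plan is to produce a monoidal natural isomorphism $\eta \colon Z_\text{SN} \Rightarrow Z_\text{TV}$ between the two functors $\Bord^\text{or}_{23} \to \Vect$. Both are honest TQFTs --- $Z_\text{SN}$ by the preceding theorem and $Z_\text{TV}$ by \cite{turaev1992state, t94-qik} --- so each is, by Juh\'asz' presentation \cite{juhasz2018defining}, completely determined by its values on the generating surfaces and surgery morphisms subject to the relations $\mathcal{R}$. Consequently a natural transformation between them amounts to a family of linear maps $\eta_\Sigma$, one for each closed oriented surface $\Sigma$, such that the naturality square commutes for every mapping-cylinder morphism and every surgery generator; monoidality is the requirement $\eta_{\Sigma \sqcup \Sigma'} = \eta_\Sigma \otimes \eta_{\Sigma'}$.

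First I would define the components $\eta_\Sigma \colon Z_\text{SN}(\Sigma) \to Z_\text{TV}(\Sigma)$ on surfaces. The underlying vector-space isomorphism is essentially already in the literature \cite{kirillov2011string, balsam2012turaev, goosen2018oriented}: evaluating a $C$-labelled string-net against a triangulation produces a compatible system of state-sum vectors and hence a well-defined element of the colimit $Z_\text{TV}(\Sigma)$, while conversely the state-sum data attached to a triangulated surface is literally a string-net supported on the dual $1$-skeleton. I would recall this correspondence, check that it is independent of the auxiliary choices and therefore natural with respect to orientation-preserving diffeomorphisms (which settles the naturality squares for all mapping-cylinder morphisms), verify that it respects disjoint unions, and confirm that it is a linear isomorphism by exhibiting the inverse directly.

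The substantive step is to verify that $\eta$ intertwines the two theories on each of Juh\'asz' surgery generators $e$, that is, that $\eta_{\Sigma'} \circ Z_\text{SN}(e) = Z_\text{TV}(e) \circ \eta_\Sigma$. For each generator I would realise the corresponding elementary cobordism by an explicit triangulated cobordism, read off the Turaev--Viro state-sum map it induces, and compare it under $\eta$ with the string-net modification assigned to $e$ in Definition \ref{defn_of_string_modifications}. Since the mapping-cylinder case is handled by the previous paragraph, this reduces the theorem to a finite check, one surgery move at a time.

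I expect the main obstacle to be exactly this generator-by-generator comparison, because the two models speak genuinely different languages --- triangulations and state sums versus graphs-on-surfaces and local relations. Translating a triangulated elementary cobordism into its effect on string-nets is delicate, and the real danger lies in the normalisation constants: the quantum dimensions and the global dimension $D$ of $C$ enter both the Turaev--Viro weights and the string-net relations, and these must be made to match on the nose. The handle-attachment and sphere birth/death moves, where powers of $D$ appear, are the cases where the spherical fusion structure of $C$ is genuinely used and where an off-by-$D$ discrepancy would otherwise hide; pinning these factors down is where the bulk of the work will go.
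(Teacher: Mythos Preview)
Your outline is sound, but it takes a genuinely different route from the paper. The paper does not carry out a direct generator-by-generator comparison with the Turaev--Viro state sum at all. Instead it observes that, upon choosing pants decompositions of surfaces and restricting Juh\'asz' infinite family of surgery generators to the finite list appearing in the presentation of $\Bord^\text{or}_{123}$ from \cite{PaperIV}, the linear maps assigned in Definition~\ref{defn_of_string_modifications} coincide on the nose with Goosen's string-net assignments at the $23$-level \cite[Theorem~71]{goosen2018oriented}. Goosen has already shown \cite[Theorem~101]{goosen2018oriented} that his extended string-net TQFT $Z^{123}_\text{SN}$ is equivalent to the extended Turaev--Viro TQFT $Z^{123}_\text{TV}$; restricting that equivalence to the $23$-sector gives the theorem immediately. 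Thus the paper's argument is essentially a one-line reduction to an existing result, whereas your proposal would reprove (the $23$-part of) that result from scratch. What your approach buys is self-containment: you would not need to invoke the $123$-presentation of \cite{PaperIV} or Goosen's thesis, and you would see explicitly where each factor of $D$ goes. What the paper's approach buys is brevity and the avoidance of exactly the delicate normalisation bookkeeping you correctly flag as the main obstacle; those details have already been absorbed into \cite{goosen2018oriented}. One minor point: your phrase ``a finite check'' needs the reduction via R3 (naturality under diffeomorphisms, already established) to pass from the infinite family of framed-sphere embeddings to one representative per index $k\in\{-1,0,1,2\}$; you should make that step explicit.
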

The string-net TQFT $Z_\text{SN}$ is an intrinsically two-dimensional way to define the TQFT which accords well with the intuitive idea that a spherical fusion category is an algebraic structure with an intrinsically two-dimensional graphical calculus. Indeed, this is the first motivation for this work --- to demonstrate that Juh{\'a}sz's presentation of $\ThreeBord$ fits together very neatly with the graphical calculus for spherical fusion categories. 

The second motivation is to try to make contact with recent work \cite{costantino2020kuperberg} extending the Turaev-Viro invariants of 3-manifolds to {\em non-semisimple} spherical tensor categories. The third motivation is to try and make contact with recent work on string-net models for {\em non-spherical} pivotal fusion categories \cite{runkel2020string}. We remark here for the experts that we use the spherical property in various ways in our construction. One interesting way it features is in the proof of conjugation invariance for surgery on a framed $0$-sphere (the Kirby loop must be invariant under orientation flip of the belt circle). 

Our approach is similar to that of Goosen \cite[Chapter 5]{goosen2018oriented} (see also \cite{bartlett2021extended}), who also used string-nets to construct a 3-dimensional oriented TQFT from a spherical fusion category $C$. However, whereas we use the presentation of the oriented bordism category $\Bord^\text{or}_{23}$ due to Juh{\'a}sz \cite{juhasz2018defining}, Goosen used the presentation of the oriented bordism {\em bi}category $\Bord^\text{or}_{123}$ due to the current author and collaborators \cite{PaperIV}. 

Juh{\'a}sz's presentation \cite[Definition 1.4]{juhasz2018defining} is infinite but very geometric since arbitrary surgery moves are allowed, while the presentation from \cite{PaperIV} is finite, less geometric and more combinatorial in nature since only a specific finite list of surgery moves is allowed. In practice, for instance, this means that working out the linear map $Z(M)$ associated to a 3-dimensional cobordism $M$ is easier in our approach here than in \cite{goosen2018oriented}. In our approach, one just needs a Morse function on $M$, while in Goosen's approach one needs to present $M$ explicitly as a composite of the generating 2-morphisms from \cite{PaperIV}.

This paper is organized as folllows. In Section \ref{string_net_sec} we review the construction of the string-net space of a closed oriented surface. In Section \ref{surgery_section} we review Juh\'{a}sz' two-dimensional surgery presentation of the three-dimensional oriented bordism category. In Section \ref{TQFT_section} we carefully define the cutting move, define linear maps associated to the surgery generators in Juh\'{a}sz' presentation, and check that that they satisfy the relations. Finally, we show that the resulting TQFT is naturally isomorphic to the Turaev-Viro model.

\subsection*{Acknowledgements} I would like to thank Andr\'{a}s Juh\'{a}sz, Gerrit Goosen, Nathan Geer and Azat Gainutdinov for helpful discussions and comments. This project has received funding from the European Research Council (ERC) under the European Union's Horizon 2020 research and innovation programme (grant agreement No 674978).

\section{String-nets} \label{string_net_sec}
In this section, we review the notion of the {\em string-net space} $Z_\text{SN}(\Sigma)$ of an oriented surface $\Sigma$, given the initial data of a spherical fusion category $C$. The main reference is \cite{kirillov2011string}, but we will essentially adopt the notation from Goosen's thesis \cite[Chapter 4]{goosen2018oriented}, which makes certain details explicit which are left implicit in \cite{kirillov2011string}. We will also add some details and observations of our own --- see Remarks \ref{how_to_evaluate}, \ref{frobenius_schur}, \ref{s_move_remark} and \ref{kirby_well_defined}.

\subsection{Graphical calculus for spherical fusion categories}
There is a well-known two-dimensional {\em string diagram} graphical calculus for working with monoidal categories (see \cite{selinger2010survey} for an overview, and \cite{bartlett2016fusion} for our conventions on associators). Our diagrams will go from top to bottom, so that a morphism $f : A \rightarrow B \otimes C$ is drawn as
\[
\ig{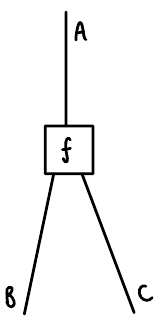} \, .
\]
More precisely, we say that the {\em value} of the above string diagram is $f$. We will refer to these planar diagrams, where there are clear top-to-bottom and left-to-right directions, where the coupons are rectangular, which depict a specific morphism in the monoidal category, and which are generally drawn in black, as {\em rectangular string diagrams}, to distinguish them from {\em string nets}, which live in a closed surface (see the next section), and which we will generally draw in red.

A {\em fusion category} $C$ is a rigid semisimple $\mathbb{C}$-linear monoidal category with finitely many isomorphism classes of simple objects and whose unit object is simple. This structure adds some new features to the graphical calculus. 

Firstly, note that since $1$ is simple, endomorphisms of $1$ can be canonically identified with complex numbers.

Secondly, rigidity means that every object $V$ now has a {\em dual} $V^*$. The edges in the graphical calculus are now given an orientation: a downward strand labelled $V$ refers to $V$ while an upward strand labelled $V$ refers to $V^*$. There exist right cap $\eta : 1 \rightarrow V^* \otimes V$ and cup $\epsilon : V \otimes V^* \rightarrow 1$ duality maps, drawn as
\[
 \ig{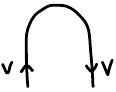}, \qquad \ig{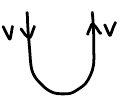} \, ,
\]
and left cap $n : 1 \rightarrow V \otimes V^*$ and cup $e : V^* \otimes V \rightarrow 1$ duality maps, drawn as
\[
 \ig{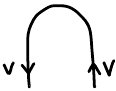}, \qquad \ig{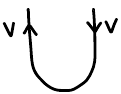} ,
\]
satisfying the rigidity equations
\[
 \ig{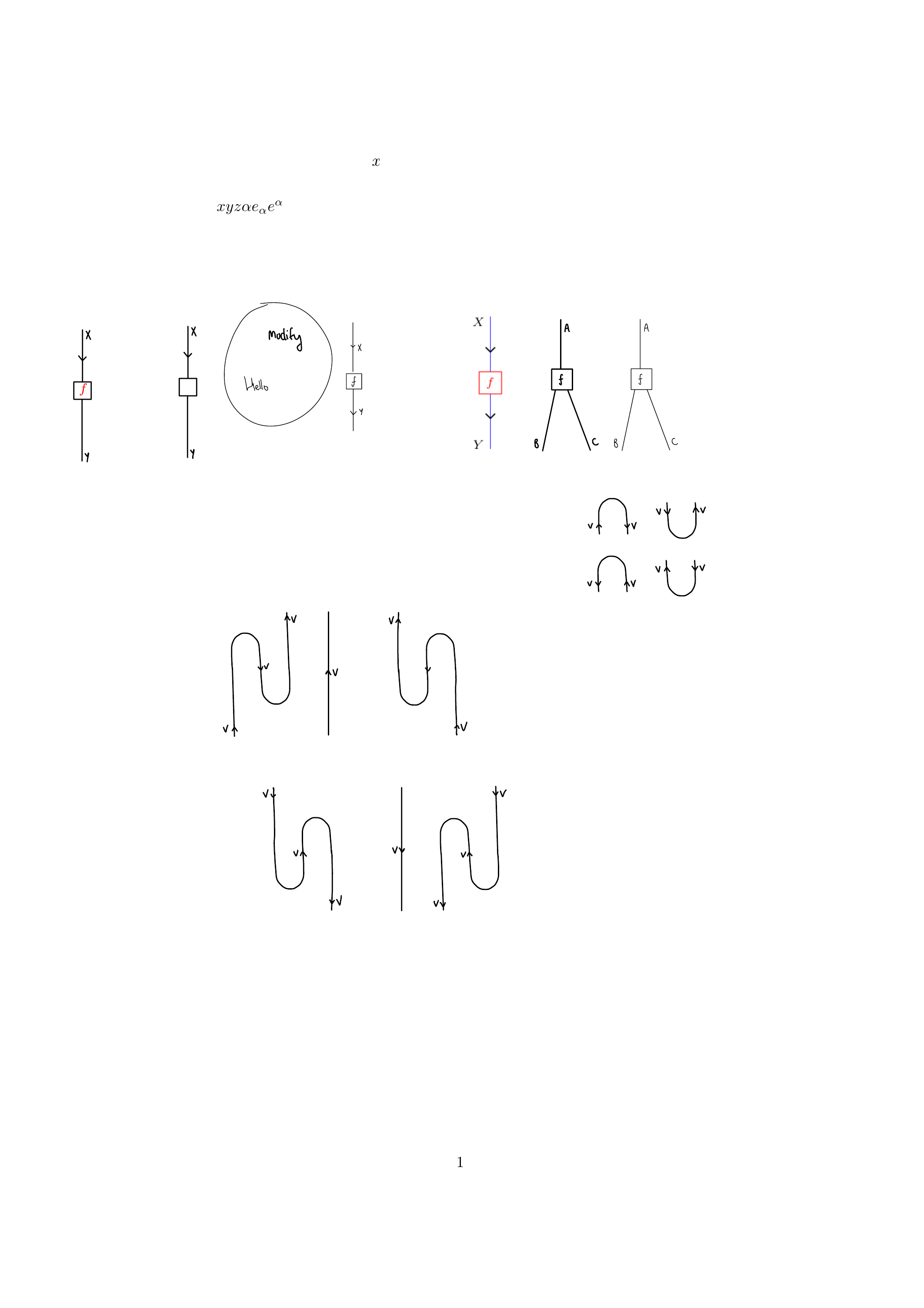} = \ig{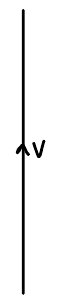} = \ig{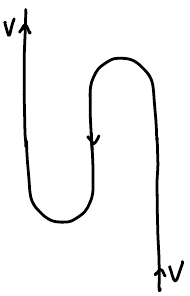} \quad , \quad \ig{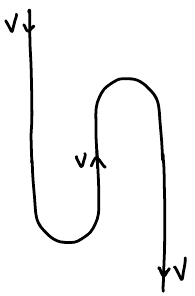}  = \ig{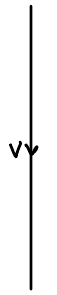}  = \ig{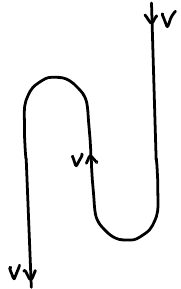} \, .
\]
Thirdly, semisimplicity means that there finitely many simple objects (i.e. their endomorphism vector space is 1-dimensional) $X_i$, $i = 1 \ldots n$, and for each pair of objects $V,W \in C$ the canonical composition map
\be \label{semisimple_eqn}
 \bigoplus_i \Hom(V, X_i) \otimes \Hom(X_i, W) \rightarrow \Hom(V, W)
\ee
is an isomorphism of vector spaces. In particular (by considering the summand where $X_i = 1$ and inserting cup and cap maps to interchange inputs to outputs), \eqref{semisimple_eqn} implies that for any objects $V_1, \ldots, V_n$ in $C$, the pairing
\begin{align}
 \Hom(1, V_1 \otimes \cdots \otimes V_n) \otimes \Hom(1, V_n^* \otimes \cdots \otimes V_1^*) & \rightarrow \mathbb{C} \label{pairing} \\
 \ig{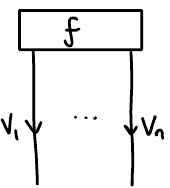} \otimes \ig{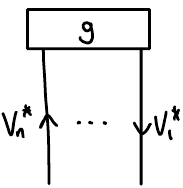} & \mapsto \ig{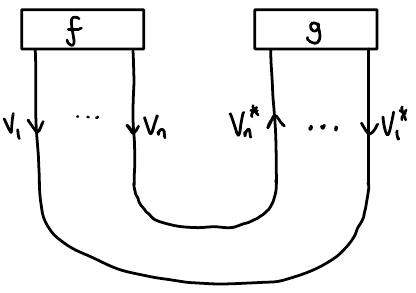}  
\end{align}
is nondegenerate.

A {\em pivotal structure} on a fusion category is a monoidal natural isomorphism $\gamma : \id \Rightarrow **$ where $*$ is the dualization functor. The key property this brings to the graphical calculus is that right and left duals agree, i.e.
\be \label{left_dual_equals_right_dual}
\ig{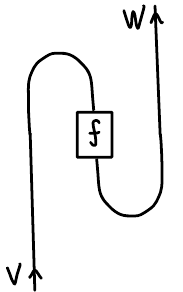} = \ig{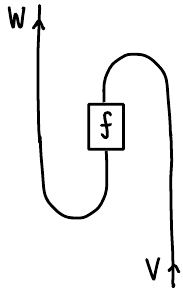}
\ee
for all morphisms $f : V \rightarrow W$. This means, in particular, that one can compute the pairing maps \eqref{pairing} using the right or left unit maps; the answer is the same:
\be \label{symmetry_of_pairing}
 \ig{c3.pdf} = \ig{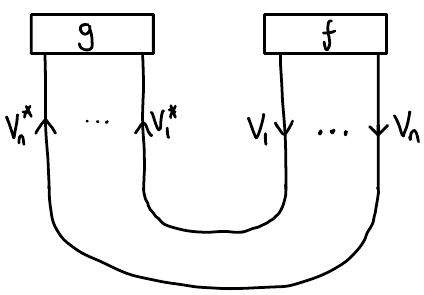}
\ee

The pivotal structure is called {\em spherical} if the two ways of computing the dimension of an object agree, i.e. if 
\[
 \ig{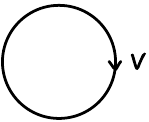} = \ig{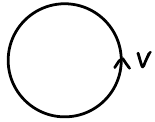}
\]
for all objects $V$. A {\em spherical fusion category} is a fusion category equipped with a spherical structure.

\subsection{$C$-labelled graphs}
For the purposes of this paper, a {\em graph} $G$ is a 1-dimensional CW-complex. We write $V(G)$ and $E(G)$ for the set of vertices and edges of $G$. Note that $V(G)$ and $E(G)$ are finite, and multiple edges and loops are allowed. An {\em oriented edge} $\mathbf{e}$ is a pair $(e, \text{or})$ where $e \in E(G)$ and $\text{or}$ is an choice of orientation for $e$. The set of oriented edges of $G$ is denoted $E^\text{or}(G)$. If $\mathbf{e}$ is an oriented edge, its underlying unoriented edge is written as $e$, while the oppositely oriented edge is written as $\overline{\mathbf{e}}$.

Let $\Sigma$ be a 2-dimensional closed smooth manifold. A {\em smoothly embedded graph in $\Sigma$} is a topological embedding
\[
 \phi : G \hookrightarrow \Sigma
\]
of a graph into $\Sigma$ whose restriction to the open 1-cells of $G$ is smooth. We will often identify the graph $G$ and its embedded image $\phi(G) \subset \Sigma$.  

Now let $C$ be a spherical fusion category.
\begin{definition} \label{defn_colored_graph} A {\em $C$-labelled graph in a closed oriented surface $\Sigma$} is a 4-tuple $(G, l, \epsilon, f)$, where:
\begin{itemize}
 \item $G$ is a smoothly embedded graph in $\Sigma$,
 \item $l$ is a map $E^\text{or}(G) \rightarrow \text{Ob}(C)$, satisfying $l(\overline{\mathbf{e}})$ = $l(\mathbf{e})^*$ for all $\mathbf{e} \in E^\text{or}(G)$,
 \item $\epsilon$ assigns to each vertex $v \in V(G)$ a choice of {\em initial half-edge} $\epsilon_v \in E(G)$ incident to $v$, and
 \item $f$ assigns to each vertex $v$ a morphism 
 \[
 f_v \in \Hom_C (1, l(\mathbf{e}_1)  \otimes \cdots \otimes l(\mathbf{e}_n)),
 \]
where the edges incident to $v$ are $\mathbf{e}_1, \ldots, \mathbf{e}_n$, taken in counterclockwise order according to the orientation of $\Sigma$, each with an outgoing orientation, and with $e_1 = \epsilon_v$.
\end{itemize}
We write $\Graph_C (\Sigma)$ for the collection of all $C$-labelled graphs in $\Sigma$.
\end{definition}
We will often abuse notation and refer to a $C$-labelled graph simply by its underlying graph $G$.

\subsection{Evaluation in a disk}
Let $\Sigma$ be an oriented surface, and $G$ a $C$-labelled graph in $\Sigma$. A {\em properly embedded disk in $\Sigma$} is a smooth orientation-preseving embedding
\[
  D \hookrightarrow \Sigma
\]
of the unit disk $D \subset \mathbb{R}^2$ into $\Sigma$, such that the edges of $G$ intersect the image of $\partial D$ transversely, and no edge intersects $p$, the image of $(1,0)$. We will often use the same symbol $D$ to refer to both the standard disk $D \subset \mathbb{R}^2$ and its embedded image in $\Sigma$.

\begin{definition} \label{eval_defn} Let $G$ be a $C$-labelled graph in $\Sigma$, and $D$ a smoothly embedded disk in $\Sigma$. Let $V_1, \ldots, V_n$ be the obects labelling the outgoing edges which intersect $\partial D$, taken in counterclockwise order starting from $p \in \partial D$.  The {\em evaluation of $G$ in $D$} is the morphism
\[
   \langle G \rangle_D \in \Hom_C (1, V_1 \otimes \cdots \otimes V_n)
\]
defined as the value of the rectangular string diagram obtained as follows:
\begin{itemize}
 \item Pull back $G \cap D$ to $\mathbb{R}^2$ via the embedding $D \hookrightarrow \Sigma$,
 \item Replace each vertex $v$ in $G$ with the rectangular coupon for $f_v$, and loop the edges round appropriately to mimic their appearance in a small neighborhood of $v$, 
 \item Loop the edges intersecting $\partial D$ round to the bottom of the diagram. 
\end{itemize}
\end{definition} 
\noindent See Figure \ref{evalfig} for an example.

\begin{figure}[t]
 \[
 \begin{aligned} \includegraphics[width=0.6\textwidth]{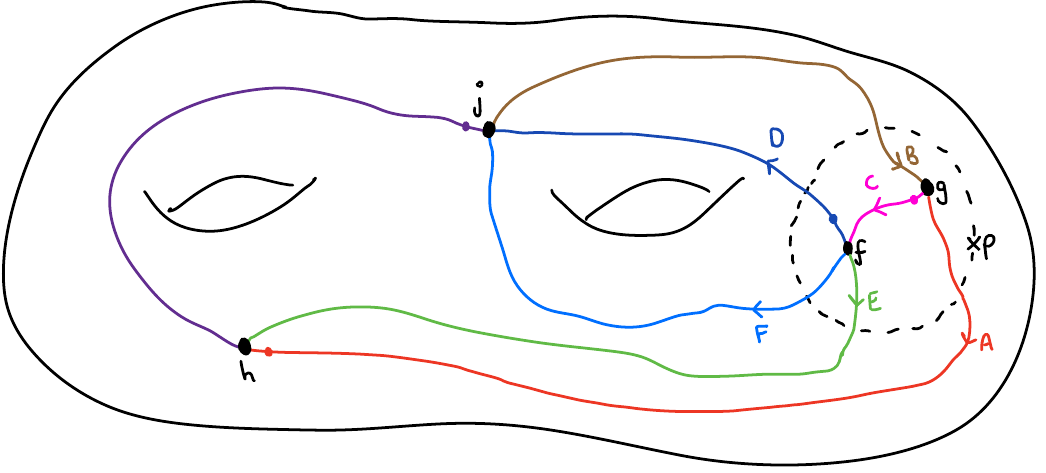} \end{aligned} \quad \begin{aligned} \includegraphics[width=0.4\textwidth]{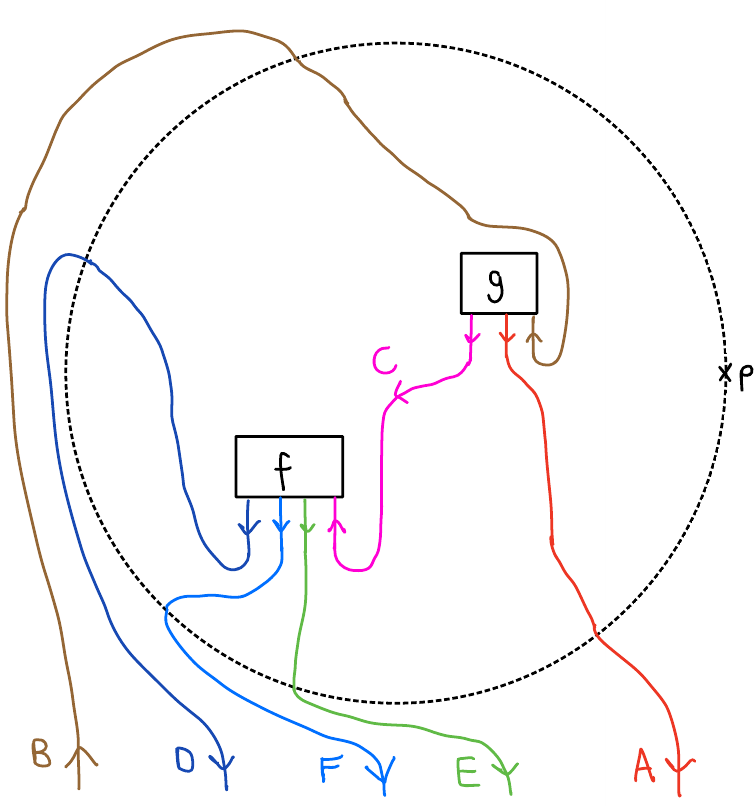} \end{aligned}
 \]
 \caption{\label{evalfig} The evaluation of the embedded disk is a rectangular string diagram representing a morphism $1 \rightarrow B^* \otimes D \otimes F \otimes E \otimes A$.}
\end{figure}

\begin{remark} \label{how_to_evaluate}There is a possible ambiguity here since we have not given a precise specification for the `loop the edges' instruction in the second and third steps of Definition \ref{eval_defn}. One could loop them clockwise or counterclockwise, possibly multiple times. However, one can verify that the pivotal structure on the category  --- in particular, equation \eqref{left_dual_equals_right_dual} --- implies that all ways of looping will evaluate to the same morphism.
\end{remark}

\begin{remark} Whenever we draw a $C$-labelled graph in $\mathbb{R}^2$ (such as the pullback of a graph in $\Sigma$ along an embedding $D \hookrightarrow \Sigma$), the orientation of the plane is understood to be counterclockwise. 
\end{remark}

\begin{lemma}\cite[Theorem 2.3]{kirillov2011string} The evaluation of a $C$-labelled graph in a disk $D \subset \Sigma$ has the following properties:
\begin{enumerate}
 \item If $G \cap D$ consists of a single vertex labelled by $f \in \Hom(1, V_1 \otimes \ldots V_n)$, then $\langle G \rangle_D = f$. That is,
 \[
 \Bigg \langle \ig{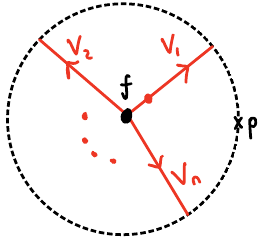} \Bigg \rangle_D = f.
 \]
 \item If $G \cap D$ and $G' \cap D$ differ only by isotopy, then $\langle G \rangle_D = \langle G' \rangle_D$.
 \item Rotating the choice of initial half-edge gives the equality
 \[
 \Bigg \langle \ig{b5.pdf} \Bigg \rangle_D = {\large \langle} \ig{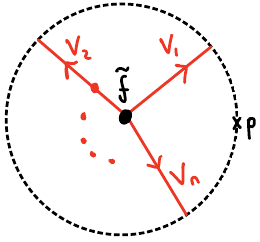} \Bigg \rangle_D 
 \]
 where
 \[
  \ig{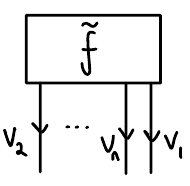} = \ig{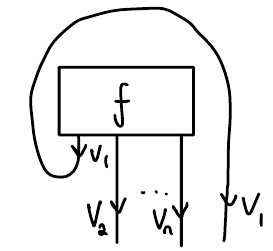} \, .
 \]
 \item The following `merging moves' hold:
 \begin{enumerate}
  \item For vertices:
  \[
   \Bigg \langle \ig{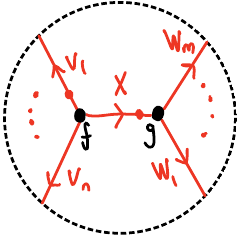} \Bigg \rangle_D = \Bigg \langle \ig{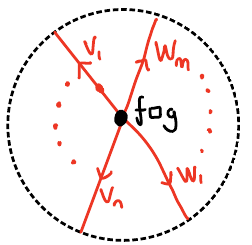} \Bigg \rangle_D
  \]
  where $f \square g$ is the morphism
  \[
   \ig{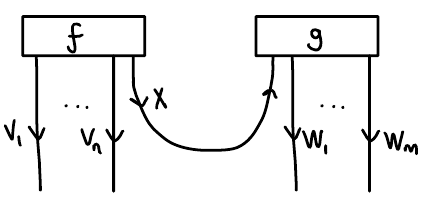} .
  \]
  \item For edges:
  \[
   \Bigg \langle \ig{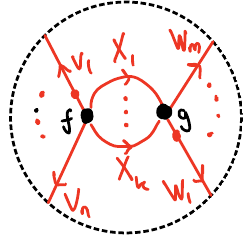} \Bigg \rangle_D = \Bigg \langle \ig{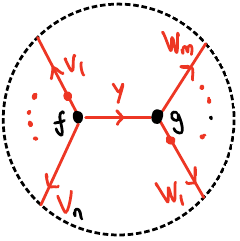} \Bigg \rangle_D
  \]
  where $Y=X_1 \otimes \cdots X_n$.
 \end{enumerate} 
 \item Suppose $f = a_1 f_1 + a_2 f_2$ holds inside $\Hom(1, V_1 \otimes \cdots \otimes V_n)$. Then
 \[
 \!\!\!\!\!\!\! \Bigg \langle \ig{b5.pdf} \Bigg \rangle_D = a_1 \Bigg \langle \ig{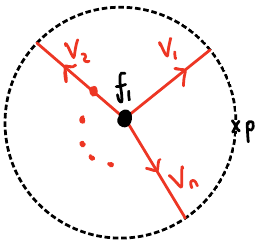} \Bigg \rangle_D + \,\, a_2 \Bigg \langle \ig{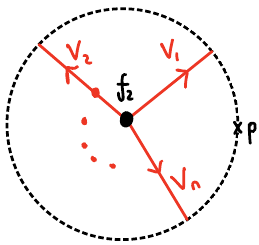} \Bigg \rangle_D \, .
 \]
\end{enumerate}
\end{lemma}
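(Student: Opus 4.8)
The plan is to verify all five properties directly from the definition of evaluation (Definition~\ref{eval_defn}), treating each as a consequence of the standard axioms of the pivotal graphical calculus: planar-isotopy invariance of the value of a rectangular string diagram, the rigidity (zigzag) equations, the pivotal identity \eqref{left_dual_equals_right_dual}, and $\mathbb{C}$-bilinearity of composition and tensor product. As a preliminary step I would invoke Remark~\ref{how_to_evaluate} to fix once and for all a looping convention (say, always looping edges clockwise to reach the bottom of the diagram); since the pivotal structure guarantees that every looping convention yields the same morphism, I may compute $\langle G \rangle_D$ with whichever convention is most convenient for a given property.

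Properties (1) and (5) are essentially immediate. For (1), a single coupon $f$ with its legs looped straight to the bottom is, after the looping convention is applied, the rectangular string diagram whose value is by definition $f$. For (5), linearity is inherited from the fact that the value of a string diagram depends $\mathbb{C}$-linearly on each of its coupons, so replacing the coupon $f = a_1 f_1 + a_2 f_2$ distributes over the evaluation. The merging moves (4) are similarly structural: both sides of (4a) and (4b) are values of string diagrams that become literally equal after composing or tensoring the relevant coupons, so one only needs to check that $f\,\square\,g$ and the edge-splitting are precisely the morphisms produced by the graphical rewriting, which is a direct reading of the definitions.

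For property (3) I would exhibit the explicit isomorphism $\Hom(1, l(\mathbf{e}_1) \otimes \cdots \otimes l(\mathbf{e}_n)) \cong \Hom(1, l(\mathbf{e}_2) \otimes \cdots \otimes l(\mathbf{e}_n) \otimes l(\mathbf{e}_1))$ given by looping the first leg around, which is exactly the passage from $f$ to the morphism $\tilde{f}$ depicted in the lemma. The claim is then that looping a leg around the coupon before looping to the bottom produces the same boundary-ordered morphism as starting the cyclic reading one step later; this follows by sliding the extra cup and cap past the boundary-looping step and cancelling them with a zigzag identity. Symmetry of the pairing \eqref{symmetry_of_pairing} is what ensures the two cyclic readings are genuinely inverse.

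The real work is property (2), isotopy invariance, and this is where I expect the main obstacle. The strategy is to reduce an arbitrary ambient isotopy of $G \cap D$ (kept transverse to $\partial D$, with the basepoint $p$ tracked) to a finite composite of elementary moves and to match each elementary move with an axiom. The moves fall into three types: planar, height-preserving isotopies, which leave the value of the string diagram unchanged automatically; creation and annihilation of zigzags, handled by the rigidity equations; and moves in which a strand or coupon is dragged past a critical point of the height function, so that a leg which was looped on one side becomes looped on the other. This last type is the delicate one: reconciling the two evaluations requires the pivotal identity \eqref{left_dual_equals_right_dual} to slide morphisms through cups and caps, and --- for an isotopy that carries a strand all the way around a coupon, reversing the side on which the loop closes --- the spherical condition, which is precisely the statement that the two ways of closing such a loop compute the same morphism. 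The bookkeeping of enumerating the elementary moves and checking that sphericity (rather than mere pivotality) is needed for exactly the all-the-way-around case is the part that demands the most care; everything else is routine graphical manipulation.
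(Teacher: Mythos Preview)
The paper does not supply its own proof of this lemma; it is simply quoted as \cite[Theorem 2.3]{kirillov2011string}. So there is no argument in the paper to compare against, and your sketch stands on its own as an independent verification.

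Your outline follows the natural strategy: unwind Definition~\ref{eval_defn}, invoke Remark~\ref{how_to_evaluate} to eliminate the looping ambiguity, and reduce everything to the standard invariance theorems for the pivotal graphical calculus. Properties (1), (4) and (5) are indeed routine, and (3) is exactly the pivotal ``rotation'' isomorphism you describe (though your appeal to the symmetry of the pairing~\eqref{symmetry_of_pairing} is slightly off-target: the rotation of the initial half-edge is governed directly by the zigzag and pivotal identities, not by that pairing).

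There is one genuine error in your treatment of property~(2): you do \emph{not} need sphericity, only pivotality. The isotopies in question are ambient isotopies of $G \cap D$ inside the disk $D$ with the boundary intersection pattern fixed, and in that setting a strand cannot be carried ``all the way around'' a coupon without crossing it; the move you single out as requiring the spherical axiom simply does not arise. What remains is precisely the Joyal--Street/Reshetikhin--Turaev invariance of the planar pivotal calculus under planar isotopy, which uses only rigidity and the pivotal identity~\eqref{left_dual_equals_right_dual}. Sphericity enters the paper later --- when one passes from the disk to the sphere (Example~\ref{string_net_S2}) or needs $d_i = d_{i^*}$ (Remark~\ref{kirby_well_defined}) --- but not in this lemma.
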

We also have the following graphical expression of the semisimplicity isomorphism \eqref{semisimple_eqn}.
\begin{lemma}\cite[Theorem 3.4 (5)]{kirillov2011string} \label{resolve_identity} For any object $V$,
\[
\Bigg \langle \ig{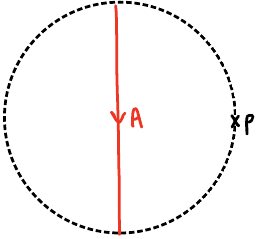} \Bigg \rangle_D = \sum_{i, \alpha}  d_i \, \, \Bigg \langle \ig{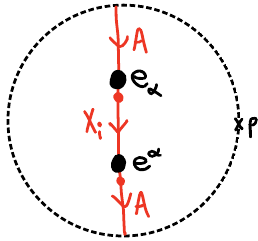} \Bigg \rangle \, 
\]
where $e_\alpha$ is a basis for $\Hom(1, X_i \otimes A^*)$ and $e^\alpha$ is the dual basis for $\Hom(1, A^* \otimes X_i)$ according to the pairing \eqref{pairing}.
\end{lemma}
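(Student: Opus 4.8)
The plan is to recognize the claimed identity as the graphical shadow of the semisimplicity isomorphism \eqref{semisimple_eqn}, with the quantum dimensions $d_i$ appearing precisely as the price of normalizing the intermediate bases against the pairing \eqref{pairing} rather than against composition. Throughout I write $A$ for the object labelling the strand (matching the basis conventions $e_\alpha \in \Hom(1, X_i \otimes A^*)$, $e^\alpha \in \Hom(1, A^* \otimes X_i)$).

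First I would identify both sides as a single morphism in a fixed Hom-space. By parts (1) and (2) of the preceding evaluation lemma the bare $A$-strand on the left evaluates to a duality (cup/cap) morphism, namely the image of $\id_A$ under the bending isomorphism $\Hom(A,A) \xrightarrow{\sim} \Hom(1, A \otimes A^*)$. The right-hand side, by the vertex- and edge-merging moves of part (4) together with part (1), is the image under the same bending isomorphism of the composite $\sum_{i,\alpha} v_\alpha \circ u_\alpha$, where $u_\alpha \in \Hom(A, X_i)$ and $v_\alpha \in \Hom(X_i, A)$ are the morphisms obtained from $e_\alpha$ and $e^\alpha$ by straightening their $A$-legs (up to the pivotal rotation of part (3)). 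It therefore suffices to prove the corresponding scalar-free identity inside $\Hom(A,A)$.

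Next I would run \eqref{semisimple_eqn} with $V = W = A$. Writing $\Hom(A,A) \cong \bigoplus_i \Hom(A, X_i) \otimes \Hom(X_i, A)$ via $u \otimes v \mapsto v \circ u$, the identity decomposes as $\id_A = \sum_{i,\alpha} v_\alpha \circ u_\alpha$, where $\{u_\alpha\}$ and $\{v_\alpha\}$ are dual with respect to \emph{composition}, i.e. $u_\alpha \circ v_\beta = \delta_{\alpha\beta}\,\id_{X_i}$, which makes sense since $X_i$ is simple. This already has the shape of the right-hand side; what remains is to reconcile this composition-dual normalization with the pairing \eqref{pairing} that actually defines $e_\alpha$ and $e^\alpha$.

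The crux, and the step I expect to be the main obstacle, is pinning down the factor $d_i$. Unwinding \eqref{pairing} in the two-leg case closes the $u_\alpha, v_\beta$ configuration into a single $X_i$-loop, so that the pairing computes the categorical trace $\mathrm{tr}(u_\alpha \circ v_\beta) = \delta_{\alpha\beta}\,d_i$ rather than $\delta_{\alpha\beta}$. Hence the pairing-dual bases $e_\alpha, e^\alpha$ differ from the composition-dual $u_\alpha, v_\alpha$ by exactly a factor of $d_i$, and reinserting this factor converts $\id_A = \sum_{i,\alpha} v_\alpha \circ u_\alpha$ into the stated $\sum_{i,\alpha} d_i\,(\cdots)$. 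Here sphericality is essential: it guarantees that the left and right traces coincide, so that closing the loop on either side yields the same $d_i$ and the dual basis $e^\alpha$ is unambiguously defined. The only remaining work is bookkeeping — checking that the pivotal rotations of part (3) and the bending isomorphisms are applied consistently on both sides — which I would discharge using the looping-invariance of Remark \ref{how_to_evaluate}.
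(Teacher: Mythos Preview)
Your argument is correct and is exactly the standard derivation: bend both sides into $\Hom(A,A)$, invoke the semisimplicity decomposition $\id_A = \sum_{i,\alpha} v_\alpha \circ u_\alpha$ with composition-dual bases, and then observe that the pairing \eqref{pairing} traces out an $X_i$-loop so that pairing-duality differs from composition-duality by a factor of $d_i$. The paper itself does not supply a proof of this lemma --- it is simply quoted from \cite[Theorem 3.4 (5)]{kirillov2011string} --- so there is nothing to compare against beyond noting that your approach matches the argument one finds in Kirillov.

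One small overclaim: sphericality is not actually \emph{essential} here. The dual basis $e^\alpha$ is unambiguously defined by the fixed pairing \eqref{pairing} regardless, and the identity you need follows from pivotality alone (which already gives \eqref{symmetry_of_pairing} and makes the bending isomorphisms behave). Sphericality only enters if you want the number $d_i$ to agree with the \emph{other} trace of $X_i$, which is not required for the statement as written.
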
 
\begin{remark} \label{frobenius_schur} The way that labellings of oriented edges are defined in Definition \ref{defn_colored_graph}, and that evaluation is defined in Definiton \ref{eval_defn}, bears careful thought. The reader will verify that for a self-dual simple object $X$, we have
\[
 \Bigg \langle \ig{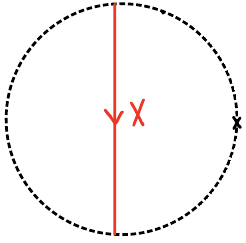} \Bigg \rangle_D = \nu_{X} \Bigg \langle  \ig{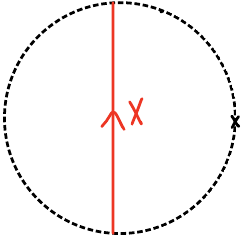} \Bigg \rangle_D
\]
where $\nu_X = \pm 1$ is the Frobenius-Schur indicator of $X$. This is because the left-hand side evaluates to the counit $\eta : 1 \rightarrow X^* \otimes X$ 
\[
  \ig{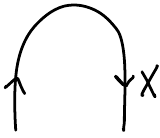}
\]
expressing $X^*$ as a right dual of $X$, while the graph sinde the disk on the right-hand side evaluates to the counit $n : 1 \rightarrow X \otimes X^*$ 
\[
 \ig{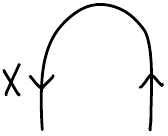}
\]
expressing $X^*$ as a left dual of $X$. For a self-dual object $X$, $n = \nu_X \eta$.
\end{remark}

\subsection{The string-net space}
Let $\Sigma$ be an oriented surface. Let $\mathbb{C}[\Graph_C(\Sigma)]$ be the vector space spanned by $C$-labelled graphs in $\Sigma$. 

\begin{definition} Let $D$ be an embedded disk in $\Sigma$. A {\em null relation relative to $D$} is a formal linear combination
\[
 a_1 G_1 + \cdots + a_n G_n \in \mathbb{C}[\Graph_C(\Sigma)]
\]
where $G_1, \ldots, G_n$ are identical on the complement of $D$, such that
\[
  a_1 \langle G_1 \rangle_D + \cdots + a_n \langle G_n \rangle_D = 0 \,.
\]
We write $\Null_C (\Sigma)$ for the subspace of $\mathbb{C}[\Graph_C(\Sigma)]$ formed by all null relations, relative to all possible embedded disks $D \hookrightarrow \Sigma$.
\end{definition}

\begin{definition} Let $\Sigma$ be a closed oriented smooth surface, and $C$ a spherical fusion category. We define the {\em $C$-labelled string-net space of $\Sigma$} as
\[
 Z_\text{SN}(\Sigma) := \mathbb{C}[\Graph_C(\Sigma)] / \Null_C (\Sigma)
\]
Elements of $Z_\text{SN}(\Sigma)$ are called {\em string-nets}. The equivalence class of $G$ is written as $\langle G \rangle$.
\end{definition}
In other words, two $C$-labelled graphs in $\Sigma$ are equivalent as string-nets if one can be transformed into the other by a finite sequence of local relations holding in disks. The following are a natural consequences of the definitions.

\begin{lemma} Let $G, G' \in \Graph_C(\Sigma)$. If $G$ is isotopic to $G'$, then $\langle G \rangle = \langle G' \rangle$.
\end{lemma}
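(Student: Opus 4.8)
The plan is to reduce the global isotopy between $G$ and $G'$ to a finite sequence of \emph{local} moves, each taking place inside a single embedded disk, and then to recognize every such local move as a null relation by invoking the isotopy-invariance of evaluation (property (2) of the evaluation lemma above). Concretely, I would first apply the isotopy extension theorem to upgrade the given isotopy of embedded graphs to an ambient isotopy $\Phi_t : \Sigma \to \Sigma$ with $\Phi_0 = \id$ and $\Phi_1(G) = G'$. Since each $\Phi_t$ is an orientation-preserving diffeomorphism isotopic to the identity, it preserves the counterclockwise cyclic order of edges at each vertex, so it transports the full labelled-graph datum $(G, l, \epsilon, f)$ along unchanged.

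The next step is a fragmentation argument: because $\Sigma$ is compact and $\Phi_1$ is isotopic to the identity, $\Phi_1$ can be written as a finite composition $\Phi_1 = \psi_N \circ \cdots \circ \psi_1$ of diffeomorphisms, each $\psi_k$ isotopic to the identity through an isotopy supported in some embedded disk $D_k \subset \Sigma$. Setting $G_0 = G$ and $G_k = (\psi_k \circ \cdots \circ \psi_1)(G)$ yields a chain $G = G_0, G_1, \ldots, G_N = G'$ in which consecutive graphs $G_{k-1}$ and $G_k$ agree on the complement of $D_k$ and are related inside $D_k$ by the isotopy generating $\psi_k$. One could equally bypass isotopy extension and fragmentation by directly subdividing the parameter interval $[0,1]$ using a Lebesgue number for a finite cover of the track of the isotopy by disks, and then arranging the changes to happen one disk at a time; the conclusion is identical.

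For each $k$, I would choose $D_k$ to be \emph{properly embedded} in the sense required for evaluation, with boundary transverse to the edges and avoiding all vertices and the marked point $p$. This is possible by a small generic perturbation, since $\psi_k$ is supported in the interior of $D_k$ and hence fixes a neighborhood of $\partial D_k$. Because $\psi_k$ restricts to an isotopy of $G_{k-1} \cap D_k$ onto $G_k \cap D_k$ fixing $\partial D_k$, property (2) gives $\langle G_{k-1} \rangle_{D_k} = \langle G_k \rangle_{D_k}$. As $G_{k-1}$ and $G_k$ are identical outside $D_k$, the element $G_{k-1} - G_k$ is a null relation relative to $D_k$, so $\langle G_{k-1} \rangle = \langle G_k \rangle$ in $Z_\text{SN}(\Sigma)$; chaining these equalities over $k = 1, \ldots, N$ gives $\langle G \rangle = \langle G' \rangle$.

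The step I expect to be the main obstacle is the localization: turning a single global isotopy, which a priori moves the entire graph simultaneously, into a \emph{finite} sequence of moves each confined to one disk, while guaranteeing at every stage that the disk is admissible for evaluation (transversality to edges, no vertex or marked point on the boundary, and intersection points on $\partial D_k$ genuinely fixed so that the complement matches). The accompanying labelled-graph bookkeeping — verifying that the ambient isotopy transports $(G, l, \epsilon, f)$ coherently — is routine but must be handled with care. Once the isotopy has been fragmented in this way, the entire remaining content is exactly property (2), which has already been established.
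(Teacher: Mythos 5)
Your argument is correct, and it supplies more detail than the paper does: the paper states this lemma without proof, as a ``natural consequence of the definitions.'' Your localization strategy --- extend the isotopy of $G$ to an ambient isotopy, fragment it into finitely many disk-supported pieces, and recognize each piece as a null relation via the isotopy-invariance of disk evaluation --- is exactly the standard argument the paper implicitly relies on. The only points deserving an explicit citation or remark are the isotopy extension theorem in the form needed for embedded graphs (which are stratified sets rather than submanifolds, so the usual statement must be applied stratum by stratum or via a collar argument near the vertices) and the fragmentation lemma for isotopies subordinate to a cover by disks; both are standard, and your handling of the admissibility of each disk $D_k$ (transversality of $\partial D_k$ to the edges, avoidance of vertices and of the marked point) is the right level of care.
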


\begin{example} \label{string_net_S2} There is a canonical isomorphism $Z_\text{SN} (S^2) \rightarrow \mathbb{C}$, obtained by isotoping the entire string-net into a disk and then evaluating it. 
\end{example}

Given an orientation-preserving diffeomorphism $\phi : \Sigma \rightarrow \Sigma'$, there is a natural push-forward map
\[
 Z_\text{SN}(\phi) : Z_\text{SN}(\Sigma) \rightarrow Z_\text{SN}(\Sigma')
\]
defined by sending string-nets in $\Sigma$ to their image in $\Sigma'$. 
\begin{remark} \label{s_move_remark} Making this push-forward as explicit as possible is the reason we have chosen to equip the vertices of our $C$-labelled graphs with explicitly chosen initial half-edges (in contrast to \cite{kirillov2011string}, where cyclic reorderings are implicitly identified by canonical isomorphisms). It is instructive to consider the case where $X$ is a self-dual object, and $f \in \Hom(1, X \otimes X \otimes X \otimes X)$. Consider the following string-net on a torus:
\[
 \ig{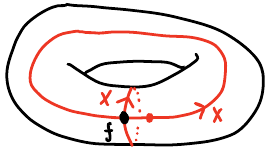}
\]
Firstly, since $X$ is self-dual, the orientations on the strands do not matter, and we redraw the string-net simply as:
\be \label{before_s}
 \ig{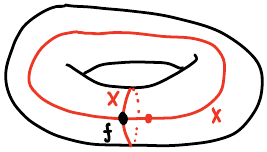}
\ee
Now, let $s : \Sigma \rightarrow \Sigma$ be the clockwise `s-move', given by rotating $\mathbb{R}^2$ clockwise by 90 degrees in the presentation of the torus as $\mathbb{R}^2 / \mathbb{Z}^2$. Then the push-forward map $Z_\text{SN}(s)$ sends
\be \label{after_s}
 \ig{f6.pdf} \mapsto \ig{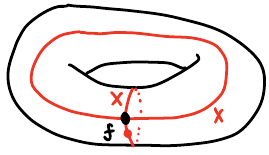} \, .
\ee
In order to be able to compare the right-hand side of \eqref{after_s} with \eqref{before_s}, we must first rotate the chosen initial half-edge of $\eqref{after_s}$ counterclockwise:
\[
 \ig{f11.pdf} = \ig{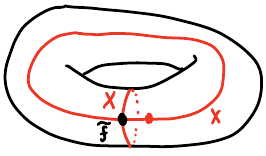}
\]
We see that $Z_\text{SN}(s)$ has the effect of sending $f \mapsto \tilde{f}$,
\[
 \ig{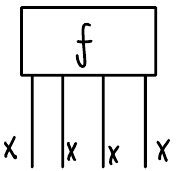} \mapsto \ig{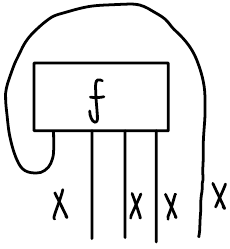} \, .
\]
In other words, $Z_\text{SN}(s)$ is precisely Ng and Schaunberg's map $E^{(4)}_X$, whose trace gives the 4th higher Frobenius-Schur indicator of $X$. If we had not chosen initial half-edges at vertices, this action would be harder to see.
\end{remark}

In summary, we have:

\begin{theorem} The string-net space construction is a monoidal functor
\[
 Z_\text{SN} : \Surfaces \rightarrow \Vect
\]
where $\Surfaces$ is the category whose objects are closed oriented surfaces, whose morphisms are orientation-preserving diffeomorphisms, and where the tensor product is disjoint union.
\end{theorem}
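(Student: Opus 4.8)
The plan is to verify the two defining properties of a monoidal functor: functoriality on morphisms and compatibility with the monoidal structure. Since the objects are closed oriented surfaces and the morphisms are orientation-preserving diffeomorphisms, almost all of the content is contained in the push-forward assignment $\phi \mapsto Z_\text{SN}(\phi)$ already described above, so the theorem is essentially a matter of checking that this assignment is well-defined and respects composition and disjoint union.

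First I would confirm that each push-forward map $Z_\text{SN}(\phi) : Z_\text{SN}(\Sigma) \to Z_\text{SN}(\Sigma')$ is a well-defined linear map. A diffeomorphism $\phi$ sends a $C$-labelled graph $G$ in $\Sigma$ to a $C$-labelled graph $\phi_* G$ in $\Sigma'$ (the underlying embedded graph is $\phi \circ \phi_G$, and the labelling data $l, \epsilon, f$ are transported verbatim; because $\phi$ is orientation-preserving, the counterclockwise cyclic order at each vertex is preserved, so the morphisms $f_v$ remain valid). This gives a linear map $\mathbb{C}[\Graph_C(\Sigma)] \to \mathbb{C}[\Graph_C(\Sigma')]$. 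The key point is that it descends to the quotient: $\phi$ carries any embedded disk $D \hookrightarrow \Sigma$ to an embedded disk $\phi(D) \hookrightarrow \Sigma'$, and since evaluation $\langle - \rangle_D$ depends only on the pulled-back graph $G \cap D \subset \mathbb{R}^2$, we have $\langle \phi_* G \rangle_{\phi(D)} = \langle G \rangle_D$. Hence $\phi_*$ sends $\Null_C(\Sigma)$ into $\Null_C(\Sigma')$ and induces a well-defined map on string-net spaces.

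Next I would check the functor axioms. For the identity, $\id_\Sigma$ fixes every graph and every disk, so $Z_\text{SN}(\id_\Sigma) = \id$. For composition, given $\phi : \Sigma \to \Sigma'$ and $\psi : \Sigma' \to \Sigma''$, one has $(\psi \circ \phi)_* G = \psi_*(\phi_* G)$ at the level of labelled graphs, so $Z_\text{SN}(\psi \circ \phi) = Z_\text{SN}(\psi) \circ Z_\text{SN}(\phi)$; this is immediate from the definition of push-forward by precomposition of embeddings. For monoidality I would exhibit the natural isomorphism $Z_\text{SN}(\Sigma_1) \otimes Z_\text{SN}(\Sigma_2) \xrightarrow{\sim} Z_\text{SN}(\Sigma_1 \sqcup \Sigma_2)$ sending $\langle G_1 \rangle \otimes \langle G_2 \rangle$ to $\langle G_1 \sqcup G_2 \rangle$, with the empty surface $\emptyset$ as monoidal unit and $Z_\text{SN}(\emptyset) \cong \mathbb{C}$ (the empty graph). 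That this map is an isomorphism follows because any embedded disk in $\Sigma_1 \sqcup \Sigma_2$ lies entirely in one component, so the null relations split as a direct sum; one then checks the associativity and unit coherence hexagons, which hold strictly since disjoint union is strictly associative and unital on both graphs and disks.

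I expect the only genuinely substantive step to be the well-definedness on the quotient, i.e. verifying $\langle \phi_* G \rangle_{\phi(D)} = \langle G \rangle_D$, which hinges on the fact that an orientation-preserving diffeomorphism identifies the pulled-back graph $G \cap D$ with $\phi_* G \cap \phi(D)$ up to an orientation-preserving reparametrization of the disk, together with the isotopy invariance of evaluation (Lemma, part 2). The monoidality verification is routine bookkeeping; the main conceptual care is in handling the basepoint $p \in \partial D$ and the choice of initial half-edges under $\phi$, but since $\phi$ transports this data coherently and evaluation is insensitive to the looping conventions by Remark \ref{how_to_evaluate}, no difficulty arises.
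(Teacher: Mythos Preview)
Your proposal is correct and complete. The paper itself offers no explicit proof of this theorem---it is stated as a summary (``In summary, we have:'') of the preceding discussion on the push-forward of string-nets under orientation-preserving diffeomorphisms---so what you have written is precisely the routine verification the paper leaves to the reader, and your identification of the one substantive step (that $\langle \phi_* G \rangle_{\phi(D)} = \langle G \rangle_D$, hence $\phi_*$ preserves null relations) matches the paper's implicit reasoning.
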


\subsection{Cloaking}
The last ingredient we need in the graphical calculus are `Kirby loops'. 

\begin{definition}[Kirby loops]   Let an unlabelled orange edge in a $C$-labelled graph $G$ be shorthand for the string-net defined as a sum of copies of $G$, where the unlabelled edge has been labelled by the simple objects $X_i$, each graph weighted by the dimension $d_i$ of $X_i$:
\[
   \ig{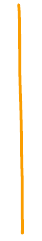} := \sum_i d_i \ig{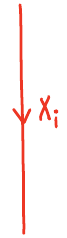}
\]
\end{definition}
\begin{remark} 
\label{kirby_well_defined} Note that this is well-defined (without specifying an orientation on the orange strand) since $d_i = d_{i^*}$.
\end{remark}

\begin{lemma}[Cloaking Lemma] \label{cloaking_lemma} \cite[Corollary 3.5]{kirillov2011string})
In the string-net space, the following relation holds in any annular region of the surface, for any object $V$:
\[
 \ig{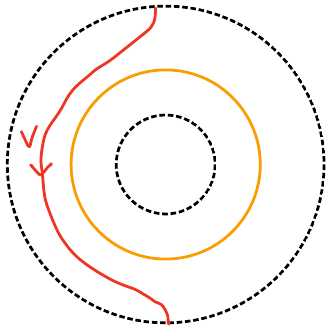} = \ig{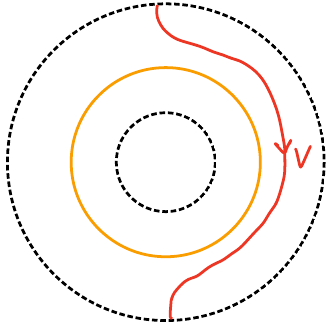}
\]
\end{lemma}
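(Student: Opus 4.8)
The plan is to reduce this global statement about an annular region to a purely local relation inside a single embedded disk, where it can be settled by evaluating rectangular string diagrams. Recall that two $C$-labelled graphs which agree outside a disk $D$ represent the same element of the string-net space as soon as the corresponding combinations of their evaluations $\langle\,\cdot\,\rangle_D$ agree. The two sides of the claimed identity differ only in how the Kirby loop $\omega := \sum_i d_i X_i$ sits relative to the $V$-strand, and by an isotopy followed by a finger-push I would arrange that this difference is supported in one disk $D$: outside $D$ the two configurations are literally identical, while inside $D$ one picture has a finger of the $\omega$-loop poking across the $V$-strand and the other has it retracted. It then suffices to prove equality of the two evaluations $\langle\,\cdot\,\rangle_D$.

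For the local computation I would first expand the orange edge by its definition, $\omega = \sum_i d_i X_i$, turning the relevant side into a $d_i$-weighted sum of genuine $C$-labelled pictures in which an $X_i$-arc meets the $V$-strand at two points. Each such picture is resolved by applying Lemma \ref{resolve_identity} to the segment of the $V$-strand caught between these two points: this replaces the segment by a sum $\sum_{j,\alpha} d_j$ over simple objects $X_j$ and dual bases $e_\alpha, e^\alpha$ of the relevant $\Hom$-spaces with respect to the pairing \eqref{pairing}. After this resolution the diagram contains a closed $X_i$--$X_j$ bubble, which the merging moves together with the nondegeneracy of \eqref{pairing} collapse to a scalar carrying a Kronecker delta in the bubble labels; performing the outer sum over $i$ against the weights $d_i$ then reorganizes the whole expression, via orthogonality of the dual bases, into exactly the evaluation of the right-hand configuration. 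Thus $\langle\,\cdot\,\rangle_D$ agrees on the two sides and the string-nets are equal.

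The main obstacle is twofold. First, the localization of the opening paragraph must be handled with care: whether the $\omega$-loop links the $V$-strand is a priori a global feature of the annulus, so I must verify that the finger-push genuinely alters the configuration only inside $D$ and that the intermediate crossing picture is resolved consistently at both of the points where the finger meets the strand. Second, and more essentially, the bubble evaluation in the second paragraph requires reading each $X_i$-loop with a definite handedness, and the two handednesses differ by an orientation flip of the loop; it is precisely here that the spherical hypothesis is indispensable, since sphericity guarantees that the left- and right-handed partial traces coincide and, together with $d_i = d_{i^*}$ (Remark \ref{kirby_well_defined}), that the unoriented Kirby loop is invariant under reversing the orientation of the strand it encircles. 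Securing this orientation-independence on the nose is the crux of the argument.
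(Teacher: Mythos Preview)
The paper does not give its own proof of this lemma; it simply cites \cite[Corollary 3.5]{kirillov2011string}. So there is no in-paper argument to compare against directly, and your proposal should be weighed against the standard proof in Kirillov.

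Your outline names the right ingredients (expansion of the Kirby colour, Lemma~\ref{resolve_identity}, the pairing~\eqref{pairing}, and sphericity via $d_i=d_{i^*}$), and you are right to flag localization and handedness as the delicate points. But the central mechanism has a genuine gap. You describe pushing a finger of the $\omega$-loop ``across'' the $V$-strand so that ``an $X_i$-arc meets the $V$-strand at two points,'' and then a ``closed $X_i$--$X_j$ bubble'' appearing after resolution. In a spherical fusion category with no braiding, two strands cannot simply cross: every intersection must be a vertex labelled by a specific morphism in $\Hom(1, V\otimes X_i\otimes V^*\otimes X_i^*)$, and you never say which one. Without that label the ``bubble'' is not a well-defined $C$-labelled graph and there is nothing to evaluate. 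Worse, the move you are invoking---passing one strand through another inside a disk---is precisely what the cloaking lemma asserts is legitimate for the Kirby colour, so using it as an intermediate step risks circularity. Your own caveat about ``resolving the intermediate crossing picture consistently'' is exactly the point that is left unresolved.

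Kirillov's argument avoids crossings altogether. One merges the parallel $V$-strand and Kirby loop into a single strand via the merging move, applies Lemma~\ref{resolve_identity} to that merged strand, and then unmerges in the opposite order; the algebraic input is the symmetry $\sum_i d_i \dim\Hom(X_k, X_i\otimes V)=d_k\,d_V=\sum_i d_i \dim\Hom(X_k, V\otimes X_i)$, which follows from semisimplicity and multiplicativity of dimensions. If you want to salvage your approach, the fix is to replace the undefined ``finger-push'' by this merge--resolve--unmerge manoeuvre.
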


\section{The surgery presentation of Juh{\'a}sz}
\label{surgery_section}
In this section we review the surgery presentation of $\Bord_{23}^\text{or}$ due to Juh\'{a}sz \cite[Definition 1.4]{juhasz2018defining}. 

\begin{remark}
In \cite{juhasz2018defining}, Juh\'{a}sz actually gives {\em two} methods of defining a 3-dimensional TQFT --- either by assigning linear maps to the surgery presentation from \cite[Definition 1.4]{juhasz2018defining} which satisfy the relations, or else by constructing a {\em $J$-algebra}. In this paper, we are concerned with the former approach.
\end{remark}

\subsection{Morse theory approach to TQFT}
It has been recognized from the beginning that Morse theory is a useful approach to TQFT \cite{kontsevich1988rational, walker-notes} . A Morse function $f : M \rightarrow [0,1]$ breaks up a cobordism $M : \Sigma_1 \rightarrow \Sigma_2$ into time slices $\Sigma_t = f^{-1}(t)$, and so assigning a linear map to the cobordism boils down to analysing the different kinds of topology change which can occur to $\Sigma_t$ when $t$ passes through a critical value, and assigning a linear map to each of them. If $n$ is the dimension of the time slice $\Sigma_t$ (so that the TQFT is $(n+1)$-dimensional), then Morse theory tells us (for an overview, see \cite{\cite{mil65-lhcob}}) that if there is a single critical point between $t_1$ and $t_2$, then $\Sigma_{t_1}$ will differ from $\Sigma_{t_2}$ by {\em surgery on a $k$-sphere}, for $k=-1, \ldots, n$ (for $k=-1$ this is defined as birth of an $n$-dimensional sphere). On the other hand, if there is no critical point between $t_1$ and $t_2$, then $\Sigma_{t_1}$ differs from $\Sigma_{t_2}$ by a diffeomorphism. 

Carrying out this programme precisely enough to obtain a presentation for the bordism category $\Bord_{n, n+1}^\text{or}$ is not trivial, since one must keep careful track of the attaching spheres for the surgeries. This was done by Juh\'{a}sz for all dimensions $n$ in \cite{juhasz2018defining}, although in this paper, we are only concerned with the case $n=2$.

\subsection{Surgery along framed spheres}

We set $S^{-1} = \phi$. Let $\Sigma$ be an oriented 2-dimensional smooth surface. Topologically, surgery on an embedded $k$-sphere in $\Sigma$ removes a thickened $k$-sphere from $\Sigma$ and glues back in a thickened $S^{1-k}$-sphere. An elegant way to perform this surgery in a smooth way was given by Milnor.
\begin{definition} \cite[Defn 3.11]{mil65-lhcob} Let $\Sigma$ be an oriented surface. For $k \in \{0, 1, 2\}$, a {\em framed $k$-sphere in $\Sigma$} is an orientation-reversing embedding
\[
 S : S^k \times OD^{2-k} \hookrightarrow \Sigma
\]
where $OD^p$ is the open unit disk of dimension $p$. Then {\em $\Sigma$ surgered along $S$} is the quotient smooth manifold
\[
 \Sigma(S) := \left(\Sigma \setminus S(S^{k} \times 0)\right) \sqcup (OD^{k+1} \times S^{1-k})
\]
obtained by identifying $S(u, \theta v)$ with $(\theta u, v)$ for all $u \in S^k, v \in S^{1-k}$, $0 < \theta < 1$. The {\em attaching sphere} of the surgery $S$ is 
\[
a_S := S(S^k \times 0) \subset \Sigma
\]
and its {\em belt sphere} is 
\[
 b_S := 0 \times S^{1-k} \subset \Sigma(S) .
 \] 
\end{definition}
See Figure \ref{surg_fig}. 
\begin{figure}[t]
\begin{align*}
 \ig{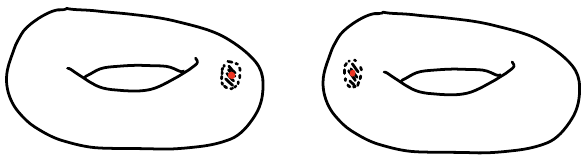} & \rightarrow \ig{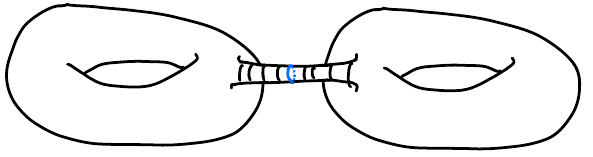} \\
 \ig{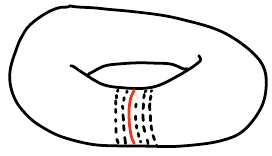} & \rightarrow \ig{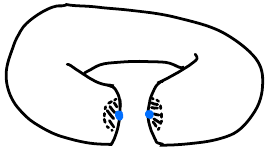}  
\end{align*}
\caption{\label{surg_fig} Surgery on a surface along a framed $0$-sphere and a framed $1$-sphere respectively. The attaching spheres of the surgeries are shown in red, while the belt spheres are shown in blue.} 
\end{figure} 

Note that away from the attaching and belt spheres we have a canonical diffeomorphism,
\[
  \phi_S : S \setminus a_S \rightarrow \Sigma(S) \setminus b_S .
\] 
For a surgery $S$ on a $0$- or a $1$-sphere, we define its {\em conjugate} $\overline{S}$ by:
\begin{align*}
 \overline{S}(x_1, y_1, y_2) &= S(-x_1, -y_1, y_2)   && (\text{$0$-sphere})\\
 \overline{S}(x_1, x_2, y_1) &= S(-x_1, x_2, -y_1)   && (\text{$1$-sphere})
\end{align*}

If $S$ is a framed $k$-sphere in $\Sigma$, then one can canonically construct a smooth cobordism $M : \Sigma \rightarrow \Sigma(S)$ (we adopt Milnor's construction from \cite[Theorem 3.12]{mil65-lhcob}) called the {\em trace of $S$}. This cobordism $M_S$ carries a canonical Morse function and gradient-like vector field, whose  associated surgery sphere is $S$.

\subsection{The presentation}
We can now write down Juh\'{a}sz' surgery presentation \cite[Definition 1.4]{juhasz2018defining} for $\Bord_{23}^\text{or}$. 
\begin{remark} In fact, Juh\'{a}sz gave a presentation for $\Bord_\text{n, n+1}^\text{or}$ for general $n$, but we only need the case $n=2$, which allows us to make some simplifications. In particular the critical-point cancellation diffeomorphism $\varphi$ in relation R5 is much easier to express. On the other hand, we do add some details not present in \cite{juhasz2018defining}, such as  being more precise in the notation for disjoint surgery spheres (R4). \end{remark}

\begin{definition} \label{juhasz_generators}
Let $\mathcal{G}$ be the directed graph where a vertex is an oriented smooth manifold $\Sigma$, and the edges are given by:
\begin{itemize}
 \item $\Sigma \stackrel{e_\phi}{\longrightarrow} \Sigma'$, where $\phi : \Sigma \rightarrow \Sigma'$ is an orientation-preserving diffemorphism,
 \item $\Sigma \stackrel{e_S}{\longrightarrow} S(\Sigma)$, where $S$ is a framed sphere in $\Sigma$.
\end{itemize}
\end{definition} 

\begin{definition} We define the following set $\mathcal{R}$ of relations in the free category $\mathcal{F}(\mathcal{G})$:
\begin{enumerate}
\item[R1.] (Isotopic diffeomorphisms) If $\phi$ is isotopic to $\phi'$ then $e_\phi \sim e_{\phi'}$.
\item[R2.] (Composition of diffeomorphisms) $e_{\phi' \circ \phi} \sim e_{\phi'} \circ e_{\phi}$ for composable diffeomorphisms $\phi'$ and $\phi$.
\item[R3.] (Surgery-diffeomorphism naturality) If $\phi : \Sigma \rightarrow \Sigma'$ is an orientation preserving diffeomorphism and $S \subset \Sigma$ is a framed sphere, let $S'= \phi \circ S$ and let $\phi^{S} : \Sigma(S) \rightarrow \Sigma'(S')$ be the induced diffeomorphism. Then the following commutative diagram is a relation:
\[
 \begin{tikzpicture}[scale=1.6]
 	\node (1) at (0,0) {$\Sigma$};
	\node (2) at (1,0) {$\Sigma(S)$};
	\node (3) at (0,-1) {$\Sigma'$};
	\node (4) at (1,-1) {$\Sigma'(S')$};
	\draw[->] (1) to node[above]{$e_{S}$} (2);
	\draw[->] (1) to node[left]{$e_{\phi}$} (3);
	\draw[->] (2) to node[right]{$e_{\phi^S}$} (4);
	\draw[->] (3) to node[below] {$e_{S'}$} (4);
 \end{tikzpicture}
\]
\item[R4.] (Disjoint surgeries commute) If $S$ and $T$ are disjoint framed spheres in $\Sigma$, let $S' = \phi_T \circ S$ and $T' = \phi_S \circ T$, and let 
\[
 \phi : \Sigma(S)(T') \rightarrow \Sigma(T)(S')
\]
be the canonical diffeomorphism. Then the following commutative diagram is a relation:
\[
 \begin{tikzpicture}[xscale=3,yscale=1.6]
 	\node(1) at (0,0) {$\Sigma$};
	\node (2) at (2,0) {$\Sigma(T)$};
	\node (3) at (0, -1) {$\Sigma(S)$};
	\node (4) at (1, -1) {$\Sigma(S)(T')$};
	\node (5) at (2, -1) {$\Sigma(T)(S')$};
	
	\draw[->] (1) to node[above]{$e_T$} (2);
	\draw[->] (2) to node[right]{$e_{S'}$} (5);
	\draw[->] (1) to node[left]{$e_S$} (3);
	\draw[->] (3) to node[below]{$e_{T'}$} (4);
	\draw[->] (4) to node[below]{$e_{\phi}$} (5);
 \end{tikzpicture}
\]
\item[R5.] (Critical point cancellation) If $S$ is a framed $k$-sphere in $\Sigma$ and $S'$ is a framed $(k+1)$-sphere in $\Sigma(S)$ such that the belt sphere $b_S$ of $S$ intersects the attaching sphere $a_{S'}$ of $S'$ once (this can only happen for $k=-1$ or $k=0$), then there exists a unique (up to isotopy) diffeomorphism $\varphi : \Sigma \rightarrow \Sigma(S)(S')$ which is the identity on $\Sigma \cap \Sigma(S)(S')$. Then we have the following relation (see Figure \ref{cancellation_fig}):
\[
  \begin{tikzpicture}[scale=1.6]
    \node (1) at (0,0) {$\Sigma$};
    \node (2) at (1,0) {$\Sigma(S)$};
    \node (3) at (1,-1) {$\Sigma(S)(S')$};
    
    \draw[->] (1) to node[above]{$e_S$} (2);
    \draw[->] (2) to node[right]{$e_{S'}$} (3);
    \draw[->] (1) to node[below left]{$e_\varphi$} (3);
  \end{tikzpicture}
\]

\item[R6.] (Conjugation invariance) For surgeries $S$ on $0$- and $1$-spheres, $e_S \sim e_{\overline{S}}$. 
\end{enumerate}
\end{definition}
\begin{figure}[t]
\[
\ig{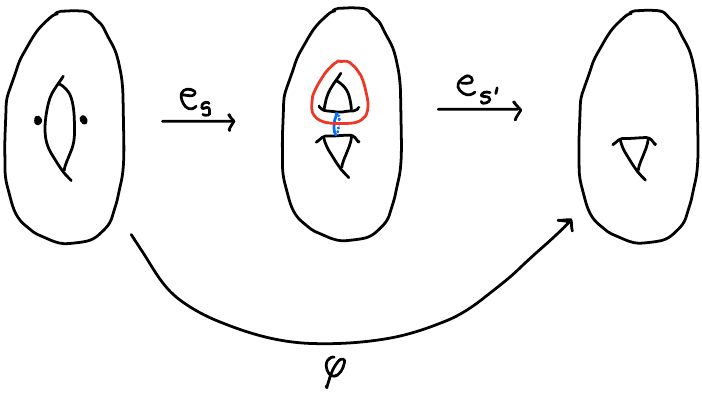}
\]
\caption{\label{cancellation_fig}Critical point cancellation for surgery on a $0$-sphere followed by surgery on a $1$-sphere. The belt sphere $a_S$ is shown in red while the attaching sphere $a_{S'}$ is shown in blue.}
\end{figure}
Having defined the relations, we obtain the quotient category $\mathcal{F}(\mathcal{G})/\mathcal{R}$. It has a natural symmetric monoidal structure when equipped with the disjoint union operation.

\begin{theorem}\cite[Theorem 1.7]{juhasz2018defining} The graph morphism
\[
 c : \mathcal{G} \rightarrow \ThreeBord
\]
which is the identity on objects, which sends a diffeomorphism edge $e_\phi$ to its corresponding mapping cylinder cobordism $M_\phi$, and which sends a surgery edge $e_S$ to is associated trace cobordism $M_S$, descends to a functor
\[
 c : F(\mathcal{G}) / \mathcal{R} \rightarrow \ThreeBord
\]
which is an equivalence of symmetric monoidal categories.
\end{theorem}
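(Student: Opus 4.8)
The plan is to verify the four conditions characterising an equivalence of categories for the functor $c$ --- well-definedness on the quotient, essential surjectivity, fullness, and faithfulness --- and then to check compatibility with the symmetric monoidal structures. Since $c$ is the identity on objects and every object of $\ThreeBord$ is a closed oriented surface, essential surjectivity (indeed bijectivity on objects) is immediate, so the content lies in the remaining three conditions.

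First I would check that the graph morphism $c$ respects the relations $\mathcal{R}$, so that it descends to the quotient $\mathcal{F}(\mathcal{G})/\mathcal{R}$. This amounts to exhibiting, for each relation, a diffeomorphism rel boundary between the two composite cobordisms. Relations R1 and R2 reduce to the standard facts that isotopic diffeomorphisms have diffeomorphic mapping cylinders and that gluing mapping cylinders realises the mapping cylinder of the composite. R3 is the naturality of the trace construction under diffeomorphisms, R4 holds because disjoint handle attachments can be performed in either temporal order to produce the same trace, and R6 is checked by writing down the explicit diffeomorphism between the traces of $S$ and $\overline{S}$. The one relation requiring genuine Morse-theoretic input is R5: the hypothesis that the belt sphere of $S$ meets the attaching sphere of $S'$ transversely in a single point is exactly the hypothesis of the handle cancellation lemma, which guarantees that the composite trace $M_{S'} \circ M_S$ is diffeomorphic rel boundary to a product cobordism, i.e.\ to the mapping cylinder $M_\varphi$.

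For fullness, given any cobordism $M : \Sigma_1 \to \Sigma_2$ I would choose a Morse function $f : M \to [0,1]$ together with a gradient-like vector field, arranged so that all critical values are distinct and that $f$ equals $0$ and $1$ on the incoming and outgoing boundary. Slicing $M$ at regular values just below and above each critical value decomposes $M$ as a composite in which each piece containing no critical point is a product, hence a mapping cylinder $M_\phi$, and each piece containing a single index-$(k+1)$ critical point is, by Milnor's description, the trace $M_S$ of surgery on a framed $k$-sphere. The resulting word in the generators of $\mathcal{G}$ is then a preimage of $M$ under $c$.

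The main obstacle is faithfulness: I must show that if two words in the generators have diffeomorphic (rel boundary) images, then they are already identified by the relations $\mathcal{R}$; equivalently, that any two Morse decompositions of the same cobordism are connected by a finite sequence of the moves encoded in R1--R6. This is where Cerf theory enters: a generic path in the space of functions between two Morse functions is Morse except at finitely many singular times, at which one of a short list of elementary events occurs --- an isotopy of the gradient-like field or an ambient handle slide (both realised through the diffeomorphism generators together with R1 and R3), an interchange of the heights of two critical points whose handles are independent (giving R4), or the birth or death of a cancelling pair of critical points of adjacent index (giving R5). The real work, and the crux of the proof, is to track framings, orientations, and attaching spheres carefully through each such event and to confirm that the induced relation between the corresponding generator words is precisely one of R1--R6, with the conjugation move R6 absorbing the ambiguity in the choice of framing normal to a $0$- or $1$-sphere. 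Finally, compatibility with the symmetric monoidal structures is routine: disjoint union of framed spheres and of diffeomorphisms corresponds to disjoint union of traces and mapping cylinders, and the symmetry isomorphisms are realised by the evident permutation diffeomorphisms, so $c$ is a symmetric monoidal functor and hence a symmetric monoidal equivalence.
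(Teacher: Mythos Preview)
The paper does not prove this theorem at all: it is stated with a citation to \cite[Theorem 1.7]{juhasz2018defining} and used as a black box, so there is no ``paper's own proof'' to compare your proposal against. Your sketch is a reasonable outline of the Cerf-theoretic argument that Juh\'{a}sz actually carries out in the cited reference --- checking that $c$ respects each relation, obtaining fullness from a Morse decomposition, and deducing faithfulness from a parametrised Morse/Cerf analysis of paths of functions --- but none of that work appears in the present paper.
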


\section{String-nets as a TQFT} \label{TQFT_section}
In this section we upgrade the string-net functor
\[
 Z_\text{SN} : \Surfaces \rightarrow \Vect
\]
from Section \ref{string_net_sec} into a full 3-dimensional TQFT
\[
 Z_\text{SN} : \ThreeBord \rightarrow \Vect
\]
by defining linear maps associated to Juh\'{a}sz' surgery moves.

\subsection{The cutting move}
First we will need to define the cutting move.
\begin{definition}\label{cutting_defn}(Cutting move) Let $G$ be a $C$-labelled graph in $\Sigma$. Suppose that in some embedded disk $D \subset \Sigma$, 
\be \label{good_position}
 G \cap D = \ig{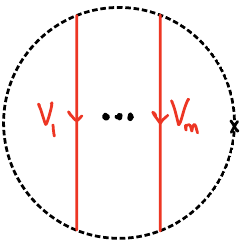} \, .
\ee
Then {\em cutting the strands inside $D$} means replacing $G$ in $D$ as follows,
\[
 \ig{d6.pdf} \quad \mapsto \quad \ig{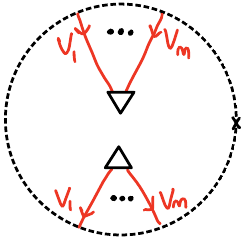} := \sum_\alpha \ig{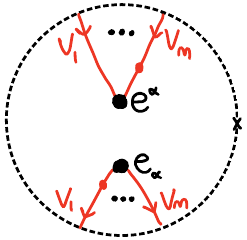} \, ,
\]
where $e_\alpha$ is a basis for  $\Hom( 1,V_1 \otimes V_m)$ and $e^\alpha \in \Hom(1, V_m^* \otimes \cdots \otimes V_1^*)$ is its dual basis, according to the pairing \eqref{pairing}.
\end{definition}
\begin{lemma} (Invariance of cutting) \label{invariance_of_cutting_lemma}Suppose that
\[
 \ig{d6.pdf} \sim \ig{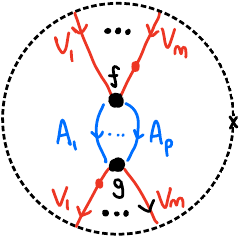}
\]
in the sense that their evaluations are equal. Then
\[
 \ig{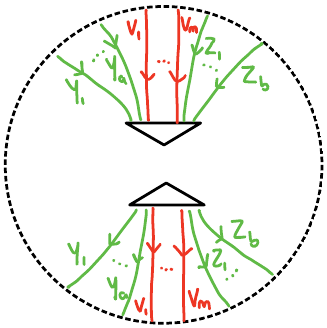} \sim \ig{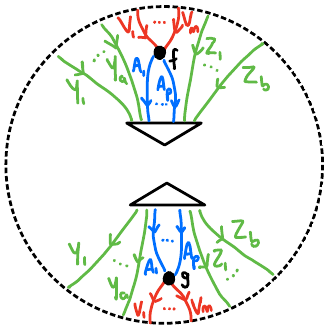} \, .
\]
\end{lemma}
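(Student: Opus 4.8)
The plan is to show that the cutting move, applied in the disk $D$, depends on the local configuration \emph{only through its evaluation} $\langle \, \cdot \, \rangle_D$. Once this is established the lemma is immediate, since by definition two graphs with equal evaluation differ by a null relation and hence represent the same string-net.

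Concretely, I would first apply Definition \ref{cutting_defn} to both inputs. Cutting the left-hand graph produces $\sum_\alpha G_\alpha$, and cutting the right-hand graph produces $\sum_\alpha G'_\alpha$, where in each summand the dual-basis vertices $e_\alpha$ and $e^\alpha$ have been inserted along the cut. The vertex $e_\alpha$ together with the entire portion of the graph on the far side of the cut is \emph{identical} in $G_\alpha$ and $G'_\alpha$; the two summands differ only in the region where the original graphs differed, which is now capped off by the vertex $e^\alpha$. I would then enclose this capped region in a single embedded disk $\hat D$ and evaluate there. Using the merging moves (part 4 of the evaluation Lemma) to absorb $e^\alpha$ into the evaluation, $\langle G_\alpha \rangle_{\hat D}$ equals the pairing of the evaluation of the left-hand graph against $e^\alpha$ under \eqref{pairing}, and similarly $\langle G'_\alpha \rangle_{\hat D}$ is the pairing of the evaluation of the right-hand graph against $e^\alpha$. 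Since these two evaluations are equal by hypothesis, we get $\langle G_\alpha \rangle_{\hat D} = \langle G'_\alpha \rangle_{\hat D}$ for every $\alpha$; hence $G_\alpha \sim G'_\alpha$ as string-nets, and summing over $\alpha$ yields the claim.

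The main obstacle is the bookkeeping that makes the phrase ``capped off'' precise: one must check that the only edges joining the differing region to the rest of the graph are the strands being cut, so that inserting $e^\alpha$ genuinely closes that region into the disk $\hat D$ and isolates the difference there. One must also confirm that absorbing the capping vertex into the evaluation is legitimate and independent of the looping conventions used to define $\langle \, \cdot \, \rangle_{\hat D}$, which is guaranteed by the pivotal identity \eqref{left_dual_equals_right_dual} as noted in Remark \ref{how_to_evaluate}. Once this localization is in place, the proof is essentially the observation that the cut factors through evaluation.
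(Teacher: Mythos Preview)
Your approach is correct in spirit and reaches the right conclusion --- that the cutting operation factors through evaluation --- but it takes a different route from the paper, and your write-up carries one simplifying assumption that needs a word of justification.

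The paper's proof is a single line: it invokes semisimplicity by inserting the resolution of the identity (Lemma \ref{resolve_identity}) on the strands. The point is that this insertion is itself a null relation, and once inserted, the cutting move is nothing but the $X_i = 1$ summand of that resolution. So two local pictures with equal evaluation are already equal \emph{after} the insertion, and hence their cuts (the $X_i=1$ pieces) coincide automatically. No term-by-term comparison is needed.

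Your argument instead cuts first and then compares summands via the pairing \eqref{pairing}. This is a legitimate unpacking of the same semisimplicity statement, and it makes the mechanism more visible. The one issue is your assumption that the $e_\alpha$-side is \emph{identical} in $G_\alpha$ and $G'_\alpha$, with the entire difference confined to the $e^\alpha$-side. In the situation where the lemma is actually used (Lemma \ref{well_defined}), the disk carrying the local relation may straddle the cutting circle, so both halves can differ. Your argument still goes through --- one simply evaluates the full cut configuration in the enclosing disk and observes that, by the dual-basis identity, the result is a bilinear expression in the two half-evaluations whose total depends only on the original evaluation --- but you should say this rather than assume one side is fixed. The paper's resolution-of-identity formulation sidesteps this case split entirely.
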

\begin{proof}
This follows from semisimplicity, by inserting the resolution of the identity on each strand as in Lemma \ref{resolve_identity}.
\end{proof}

\subsection{The string-net TQFT}
We can now define the string-net TQFT.
\begin{definition} \label{defn_of_string_modifications} We define linear maps $Z_\text{SN}(e)$ associated to the generating edges $e$ of Juh\'{a}sz' surgery presentation $\mathcal{G}$ by modifying the string-nets as follows:
\begin{itemize}
 \item (Diffeomorphism) If $\phi : \Sigma \rightarrow \Sigma'$ is an orientation-preserving diffeomorphism, define $Z_\text{SN}(e_\phi) : Z_\text{SN}(\Sigma) \rightarrow Z_\text{SN}(\Sigma')$ by pushing forward string-nets along $\phi$.
 \item (Surgery on a -1-sphere) If $S$ is birth of a 2-sphere, then define $Z_\text{SN}(e_S)$ simply by putting the empty string-net on the newly created $S^2$, and multiply by $\frac{1}{D^2}$. In pictures:
\[
 \ig{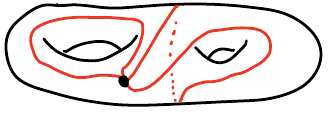} \mapsto \frac{1}{D^2} \ig{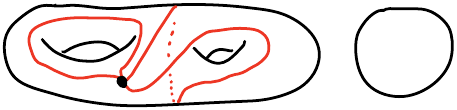}
\] 
 \item (Surgery on a 0-sphere) If $S$ is surgery on a framed $0$-sphere, define $Z_\text{SN}(e_S)$ by first isotoping any strands in the string-net away from the two surgery disks, then perform the surgery, and then insert a Kirby loop into the string-net around the belt-sphere $b_S$ of $S$:
 \[
 \ig{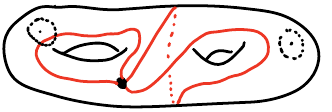} \mapsto \ig{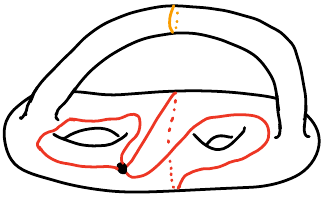}
 \]
 \item (Surgery on a 1-sphere) Suppose $S :S^1 \times OD^1 \hookrightarrow \Sigma$ is a framed $1$-sphere in $\Sigma$. Let $B_{\delta}$ be the complement of a small $\delta$-neighborhood of $p=(1,0) \in S^1$, and let $C_\epsilon$ be the complement of a small $\epsilon$-neighborhood of $0 \in OD^1$. By choosing $\delta$ and $\epsilon$ small enough, we can ensure that the pullback along $S$ of the portion of the string-net inside the image of the disk $D = B \times C$ (which lives in a small neighborhood of the attaching sphere $a_S$ of $S$) consists of $m$ parallel strands as in \eqref{cutting_defn}. Then define $Z_\text{SN}(e_S)$ by cutting the strands inside $D$, and performing the surgery. In pictures:
 \[
 \ig{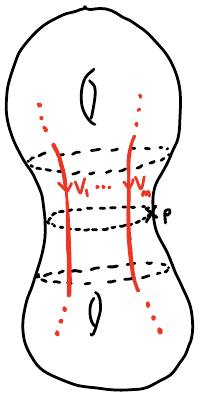} \mapsto \ig{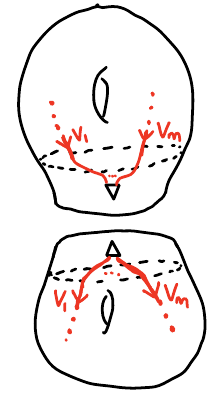}
 \]
 \item (Surgery on a 2-sphere) If $S$ is death of a 2-sphere inside $\Sigma$, then define $Z_\text{SN}(e_S)$ simply by evaluating the string-net on the 2-sphere to get a number (see Example \ref{string_net_S2}), then removing the copy of $S^2$ and multiplying the remaining string-net on $\Sigma \setminus S^2$ by this number. In pictures,
 \[
  \ig{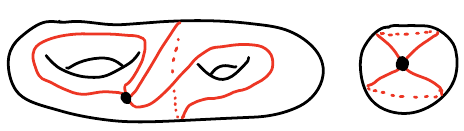} \mapsto \lambda \ig{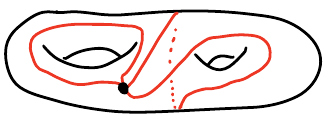}
 \]
where $\lambda$ is the evaluation of the string-net on the copy of $S^2$. 
 
\end{itemize}

\begin{lemma} \label{well_defined} The above assignments are well-defined, i.e. they do only depend on the string-net equivalence class of the $C$-labelled graphs used to compute them.
\end{lemma}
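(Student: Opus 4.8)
The plan is to show that each map $Z_\text{SN}(e)$, initially defined on representative $C$-labelled graphs, sends the subspace $\Null_C(\Sigma)$ into the null space of the target, so that it descends to the quotient $Z_\text{SN}(\Sigma) = \mathbb{C}[\Graph_C(\Sigma)]/\Null_C(\Sigma)$. Since $\Null_C(\Sigma)$ is spanned by elementary null relations $\sum_i a_i G_i$ supported in a single embedded disk $D_0$ (the $G_i$ agreeing outside $D_0$, with $\sum_i a_i\langle G_i\rangle_{D_0}=0$), it suffices to check that each generator carries such an elementary relation to a null relation. The organizing principle is locality: every operation modifies a graph only inside a controlled region $R\subset\Sigma$ (the surgery region, together with the neighbourhood used for the Kirby loop or the cut) and agrees with the canonical diffeomorphism $\phi_S$ on the complement of the attaching and belt spheres. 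Hence whenever $D_0$ can be taken disjoint from $R$, the operation leaves $G_i\cap D_0$ untouched, $\phi_S(D_0)$ is an embedded disk in the target, and the evaluation relation $\sum_i a_i\langle G_i\rangle_{D_0}=0$ persists verbatim, producing a null relation in the target.

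The three easy generators reduce to known facts. For a diffeomorphism $\phi$ the map $Z_\text{SN}(e_\phi)$ is the pushforward, whose well-definedness is exactly the functoriality of $Z_\text{SN}$ on $\Surfaces$ established earlier. Birth of an $S^2$ is the map $x\mapsto x\otimes \tfrac{1}{D^2}\langle\emptyset\rangle$ into $Z_\text{SN}(\Sigma\sqcup S^2)\cong Z_\text{SN}(\Sigma)\otimes Z_\text{SN}(S^2)$, i.e. tensoring with a fixed vector, and is therefore well-defined. Death of an $S^2$ factors as $\mathrm{id}\otimes\bigl(Z_\text{SN}(S^2)\xrightarrow{\ \cong\ }\mathbb{C}\bigr)$, using the monoidal structure and the canonical isomorphism of Example \ref{string_net_S2}; since an embedded disk lies in a single connected component, an elementary null relation on the $S^2$ factor maps to $0$ under evaluation, while one on the remaining factor is merely rescaled by the fixed number $\lambda$, so both remain null.

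The substance is in surgery on $0$- and $1$-spheres, where I would first dispose of the auxiliary choices in the definitions. The preliminary isotopy putting the strands into good position (away from the two disks for a $0$-sphere, parallel across the cutting disk $D=B_\delta\times C_\epsilon$ for a $1$-sphere) does not affect the output, because any two such choices are related through an ambient isotopy and string-nets are isotopy invariant; the independence of the parameters $\delta,\epsilon$ and of the precise cutting disk follows the same way together with Lemma \ref{invariance_of_cutting_lemma}; and the inserted Kirby loop is well-defined by Remark \ref{kirby_well_defined}. With these choices fixed, an elementary null relation whose disk $D_0$ is disjoint from $R$ is handled by the locality principle above, the Kirby-loop insertion for the $0$-sphere taking place inside $R$ and hence not interacting with $D_0$.

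The main obstacle is the overlapping case, where $D_0$ meets the modification region $R$, most delicately for the $1$-sphere, where the cut is precisely where surgery changes the connectivity of the strands and the relation cannot simply be slid aside. This is exactly what Lemma \ref{invariance_of_cutting_lemma} is designed to absorb: cutting is a well-defined linear operation depending only on the evaluation of the strands crossing the cutting disk, so a linear combination whose evaluation vanishes is carried to a vanishing combination of string-nets. Concretely, one enlarges $D_0$ to a disk containing the relevant strands of the cut, expresses the cut through the resolution of identity of Lemma \ref{resolve_identity}, and reads off that $\sum_i a_i\,Z_\text{SN}(e_S)(G_i)$ is again a null relation in $\Sigma(S)$. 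For the $0$-sphere the analogous overlap is defused already at the level of the definition: the strands are isotoped off the two surgery disks before surgery, so the topology-changing step genuinely occurs in a subregion disjoint from $D_0$, and well-definedness again follows from isotopy invariance together with Remark \ref{kirby_well_defined}.
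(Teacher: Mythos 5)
Your treatment of the diffeomorphism, birth, death and $1$-sphere cases is sound and matches the paper's argument: for the $1$-sphere surgery the paper likewise splits into the case where the local-relation disk misses the attaching circle (locality) and the case where it meets it (Lemma \ref{invariance_of_cutting_lemma}). However, your handling of surgery on a framed $0$-sphere has a genuine gap. You assert that the preliminary isotopy pushing strands off the two surgery disks is harmless ``because any two such choices are related through an ambient isotopy and string-nets are isotopy invariant.'' This does not work: two such isotopies can push a strand off a surgery disk to opposite sides, and while the resulting graphs are isotopic in $\Sigma$, the isotopy relating them necessarily sweeps across the surgery disk and therefore does not survive the surgery. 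In $\Sigma(S)$ the two outcomes differ by sliding a strand over the newly attached handle, i.e.\ across the belt circle, and they are \emph{not} isotopic there in general. The equality of the two results is exactly what the inserted Kirby loop buys you: the Cloaking Lemma (Lemma \ref{cloaking_lemma}) says that a strand may be passed through the annular region carrying the Kirby loop without changing the string-net class. This is the one nontrivial algebraic input in the paper's proof of this lemma, and it is absent from your argument; Remark \ref{kirby_well_defined}, which you cite instead, only addresses the orientation-independence of the Kirby loop ($d_i = d_{i^*}$) and cannot substitute for cloaking here.
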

\begin{proof} The two assignments we need to check are surgery on a $0$- and $1-$sphere respectively.

For surgery on a $0$-sphere, we need to ensure that the initial preparatory step of isotoping the strands away from the two surgery disks does not lead to different final results after the surgery. Happily, this is ensured by cloaking (Lemma \ref{cloaking_lemma}).

Now consider surgery on a framed $1$-sphere $S$. Let $G$ be a $C$-labelled graph $G$ in $\Sigma$. We need to check that cutting $G$ along the attaching-sphere $a_S$ of $S$ in the manner outlined in Definition \ref{defn_of_string_modifications} does not depend on any possible modification to $G$ made by applying a local relation in a disk $D$. 

If $D$ does not intersect $a_S$, then this is clear, since the same modification can be made before or after the cutting.

If $D$ intersects $a_S$, then this follows from Lemma \ref{invariance_of_cutting_lemma}.

\end{proof} 

We can now prove our main theorem.
\begin{theorem1*} Given a spherical fusion category $C$, the assignments $\Sigma \mapsto Z_\text{SN}(\Sigma)$ and $e \mapsto Z_\text{SN}(e)$ listed in Definition \ref{defn_of_string_modifications} satisfy Juh\'{a}sz' relations $\mathcal{R}$ and hence define a string-net TQFT $Z_\text{SN}$.
\end{theorem1*}
\begin{proof}
\underline{(R1 to R4)} These are clearly satisfied.

\underline{R5 (case $k=0$)}. Application of $Z_\text{SN}(e_S)$ and then $Z_\text{SN}(e_{S'})$ clearly leaves the string-net unchanged, since the Kirby loop lives in a disk and evaluates to $D^2$. In pictures:
\begin{align*}
 \ig{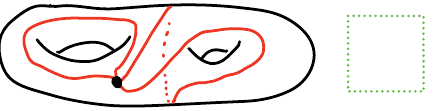} & \xmapsto{Z_\text{SN}(e_S)} \frac{1}{D^2} \ig{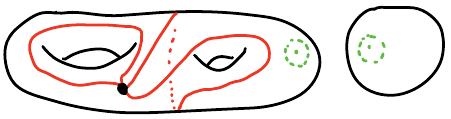} \\
 & \xmapsto{Z_\text{SN}(e_{S'})} \frac{1}{D^2} \ig{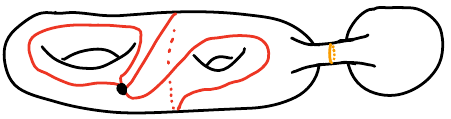} \\ 
 & = \ig{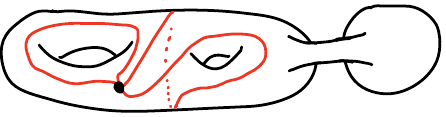}
\end{align*}
The final string-net is precisely $Z_\text{SN}(e_{\varphi})$ applied to the initial string-net, as it should be.

\underline{R5 (case $k=1$)}. Let $c = \phi_{S}^{-1}(a_{S'})$ be the pull-back to $\Sigma$ of the relevant segment of the attaching circle $a_{S'} \subset \Sigma(S)$ along the diffeomorphism $\phi_S : \Sigma \setminus a_S \rightarrow \Sigma' \setminus b_S$. Then $c$ is a curve in $\Sigma$ connecting the two points of the attaching $0$-sphere $a_S$:
\[
\ig{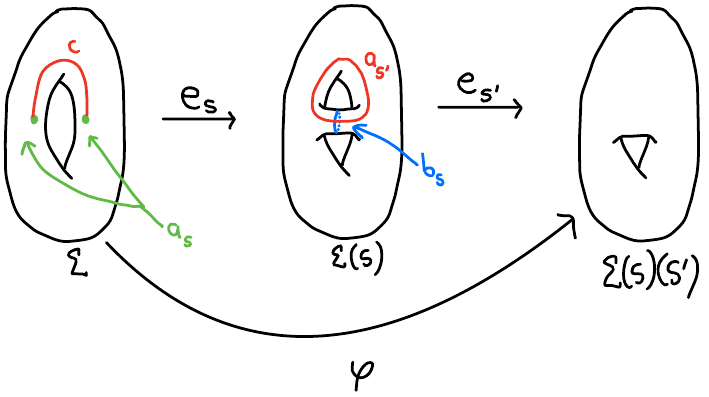}
\]
To check that the diagram commutes when we apply $Z_\text{SN}$ to the edges, we first use Lemma \ref{well_defined} to isotope the string-net in $\Sigma$ so that it does not intersect $c$. Then when we apply the cutting move $Z_\text{SN}(e_{S'})$ we only have to cut the Kirby loop, which is living on the belt-sphere $b_S$. But the cutting move applied to the Kirby loop simply removes it, thus showing that the diagram commutes.

\underline{R6 (case $k=0$)}. When we swap $S : S^0 \times D^2 \hookrightarrow \Sigma$ for its conjugate $\overline{S} : S^0 \times D^2 \hookrightarrow \Sigma$, the net effect is that the edges in the Kirby loop in $Z_\text{SN}(\Sigma(\overline{S}))$ will have the opposite orientation to the edges in the Kirby loop in $Z_\text{SN}(\Sigma(S))$. But this represents the same string net, since $d_i = d_{i^*}$ by sphericality (see Remark \ref{kirby_well_defined}). 

\underline{R6 (case $k=1$)}. Given a framed $1$-sphere $S : S^1 \times D^1 \hookrightarrow \Sigma$, consider the construction in Definition \ref{defn_of_string_modifications} of $Z_\text{SN}(e_S)$ carefully.  We are instructed to pull back the string-net inside the image of $D$ to the plane, and then cut it, 
\be
  \ig{d6.pdf} \quad \mapsto \quad  \sum_\alpha \ig{d8.pdf} \, , \label{normal_cut}
\ee
and then sew it back in. Here $e_\alpha$ is a basis for $\Hom(1, V_1 \otimes \cdots \otimes V_m$ and $e^\alpha$ is the dual basis for $\Hom(1, V_m^* \otimes \cdots \otimes V_1^*)$ according to the pairing \eqref{pairing}.

On the other hand, when we compute $Z_\text{SN}(e_{\overline{S}})$, we must effectively rotate the LHS of \eqref{normal_cut} by 180 degrees, then cut it, 
\be
 \ig{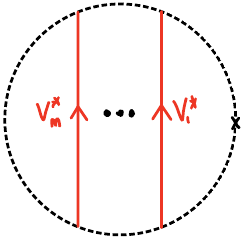} \quad \mapsto \quad \sum_\alpha \ig{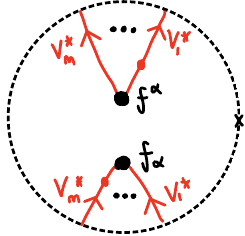} \, ,
\ee
and then rotate it back by 180 degrees and sew it back in. Here $f_\alpha$ is a basis for $\Hom(1, V_m^* \otimes \cdots \otimes V_1^*)$, and $f^\alpha$ is the dual basis for $\Hom(1, V_1 \otimes \cdots \otimes V_m)$, also according to the pairing \eqref{pairing}. In other words, we need to check whether
\[
 \sum_\alpha \ig{d8.pdf} = \sum_\alpha \ig{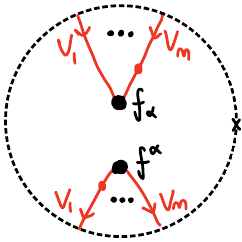} \, .
\]
But this can be proved by setting $f_\alpha := e^\alpha$, for then the symmetry property \eqref{symmetry_of_pairing} of the pairing (which was a consequence of pivotality) guarantees that $f^\alpha = e_\alpha$.
\end{proof}
We also have the following.
\begin{theorem2*} Given a spherical fusion category $C$, the string-net TQFT $Z_\text{SN}$ based on $C$ is naturally isomorphic to the Turaev-Viro TQFT $Z_\text{TV}$ based on $C$.
\end{theorem2*}
\begin{proof} This follows from the fact that, if we choose decompositions of each surface $\Sigma$ into pants, copants, cup and cap building blocks, and restrict Juh\'{a}sz' infinite list of surgery generating moves for $\ThreeBord$ listed in Definition \ref{juhasz_generators} to the finite (but sufficient) list given in the presentation of $\Bord_{123}^\text{or}$ from \cite{PaperIV} compatible with the chosen decompositions, then our assignments $e \mapsto Z_\text{SN}(e)$ precisely match those of Goosen \cite[Theorem 71]{goosen2018oriented} at the 23-level. These assignments were shown in \cite[Theorem 101]{goosen2018oriented} to give a 123 string-net TQFT $Z^\text{123}_\text{SN}$ which is equivalent to the 123 Turaev-Viro TQFT $Z^\text{123}_\text{TV}$. Restricting to the 23 sector gives the result.
\end{proof}

\end{definition}
\bibliographystyle{plain}
\bibliography{/Users/brucebartlett/Sync/Research/bibliography/references.bib}

\end{document}